\def\COMMENT#1{}
\def\TASK#1{}
\newdimen\margin   % needed for macros \textdisplay & \ltextdisplay
\def\textno#1&#2\par{%
    \margin=\hsize
    \advance\margin by -4\parindent
           \setbox1=\hbox{\sl#1}%
    \ifdim\wd1 < \margin
       $$\box1\eqno#2$$%
    \else
       \bigbreak
       \hbox to \hsize{\indent$\vcenter{\advance\hsize by -3\parindent
       \sl\noindent#1}\hfil#2$}%
       \bigbreak
    \fi}
\def\eps{\varepsilon}
\def\LL{\mathcal{L}}
\DeclarePairedDelimiter\floor{\lfloor}{\rfloor} %R
\DeclarePairedDelimiter\ceil{\lceil}{\rceil}  %R
\newtheorem{thm}{Theorem}
\newtheorem{fact}[thm]{Fact}
\newtheorem{lemma}[thm]{Lemma}
\newtheorem{prop}[thm]{Proposition}
\newtheorem{cor}[thm]{Corollary}
\newtheorem{ques}[thm]{Question}
\newcommand{\msc}[1]{\begin{center}MSC2010: #1.\end{center}}
\declaretheoremstyle[notefont=\bfseries,notebraces={}{},%
    headpunct={},postheadspace=1em]{mystyle}
\declaretheorem[style=mystyle,numbered=no,name=Theorem]{thm-hand}
\declaretheorem[style=mystyle,numbered=no,name=Proposition]{prop-hand}
\title{On solution-free sets of integers II}
\author{Robert Hancock and  Andrew Treglown}
\thanks{The second author is supported by EPSRC grant EP/M016641/1.}
\begin{document}
\label{firstpage}

\begin{abstract} 
Given a linear equation $\LL$, a set $A \subseteq [n]$ is $\LL$-free if $A$ does not contain any `non-trivial' solutions to $\LL$.
We determine the precise size of the largest $\LL$-free subset of $[n]$
for several general classes of linear equations $\LL$ of the form 
$px+qy=rz$ for fixed $p,q,r \in \mathbb N$ where $p \geq q \geq r$.
Further, for \emph{all} such linear equations $\LL$, we give an upper bound on 
the number of maximal $\LL$-free subsets of $[n]$. 
In the case when $p=q\geq 2$ and $r=1$ this bound is exact up to an error term in the exponent. 
 We make use of container and removal lemmas of Green~\cite{G-R} to prove this result. Our results also extend to various linear equations with more than three variables.  
\end{abstract}
\date{\today}
\maketitle

\msc{11B75, 05C69}

\section{Introduction}
In this paper we study solution-free sets of integers, that is, sets that contain no solution to a given linear equation $\LL$. In particular, we investigate the size of the largest such subset of 
$[n]:=\{1, \dots, n\}$ and the number of maximal solution-free subsets of $[n]$.

More precisely,  consider a fixed linear equation $\LL$ of the form 
\begin{align*}
a_1x_1+\dots +a_k x_k =b
\end{align*}
where $a_1, \dots ,a_k ,b \in \mathbb Z$. If $b=0$ we say that $\LL$ is \emph{homogeneous}.
If $$\sum _{i \in [k]} a_i=b=0$$ then we say that $\LL$ is \emph{translation-invariant}. A solution $(x_1, \dots , x_k)$ to $\LL$ is said to be \emph{trivial} if $\LL$ is translation-invariant and if there exists a partition $P_1, \dots ,P_\ell$ of $[k]$ so that: 
\begin{itemize}
\item[(i)] $x_i=x_j$ for every $i,j$ in the same partition class $P_r$; 
\item[(ii)] For each $r \in [\ell]$, $\sum _{i \in P_r} a_i=0$. 
\end{itemize}
A set $A \subseteq [n]$ is \emph{$\LL$-free} if $A$ does not contain any non-trivial solutions to $\LL$. If the equation $\LL$ is clear from the context, then we simply say $A$ is \emph{solution-free}.

Many of the most famous results in combinatorial number theory concern solution-free sets. For example, Schur's theorem~\cite{schur} states that if $n$ is sufficiently large then  $[n]$ cannot be partitioned into  $r$ \emph{sum-free} sets (i.e. $\LL$ is $x+y=z$). 
Roth's theorem~\cite{roth} states that the largest \emph{progression-free} set (i.e. $\LL$ is $x_1+x_2=2x_3$) has size $o(n)$ whilst a classical result of  Erd\H{o}s and Tur\'an~\cite{erdos} determines, up to an error term, the size of the largest \emph{Sidon set} 
(i.e. $\LL$ is $x_1+x_2=x_3+x_4$).

\subsection{The size of the largest solution-free set}
As indicated above, a key question in the study of $\LL$-free sets is to establish the size $\mu_{\LL}(n)$ of the largest $\LL$-free subset of $[n]$. 
The study of this question for general $\LL$ was initiated by Ruzsa~\cite{ruzsa,ruzsa2}, who amongst other results, established that in general $\mu _{\LL} (n)=o(n)$ if $\LL$ is translation-invariant and  $\mu _{\LL} (n)=\Omega(n)$ otherwise. % (see~\cite{ruzsa}). 
When $\LL$ is a homogeneous equation in two variables, the value of $\mu_{\LL}(n)$ is known exactly and an extremal $\LL$-free set can be found by greedy choice. See~\cite{hegarty} for further details.

For homogeneous linear equations in three variables, the picture is not as clear. First note we may assume without loss of generality that the equation is of the form $px+qy=rz$, where $p,q,r$ are fixed positive integers, and $\gcd(p,q,r)=1$. 

Now consider the following two natural candidates for extremal sets. Let $t:=\gcd(p,q)$ and let $a$ be the unique non-negative integer $0 \leq a < t$ such that $n-a$ is divisible by $t$. The interval $$I_n:=\left[ \left \lfloor \frac{r(n-a)}{p+q} \right \rfloor +1,n\right]$$ is $\LL$-free. To see this observe that since $\gcd(p,q,r)=1$ and $\gcd(p,q)=t$, any solution $(x,y,z)$ to $\LL$ with $x,y,z \in I_n$ must have $z$ divisible by $t$. Since $px+qy>r(n-a)$, $z$ must lie in $[n-a+1,n]$; however then $z$ is not divisible by $t$ and so $I_n$ is $\LL$-free. Note that when $r=1$, $I_n:=[\floor{r(n-a)/(p+q)}+1,n]=[\floor{rn/(p+q)}+1,n]$, though this does not hold in general when $r>1$.\COMMENT{ For example, take $5x+5y=3z$ and $n=34$.} 
Notice also that $I_n$ is only a candidate for an extremal set if $r$ is `small'. Indeed,
 if $r>p+q$ and $n$ is sufficiently large then $I_n=\emptyset$. The set $$T_n:=\{x \in [n]: x \not \equiv 0 \text{ mod } t\}$$ is also $\LL$-free: note that in any solution $(x,y,z)$ to $\LL$, $z$ must be divisible by $t$ since $\gcd(r,t)=1$. But $T_n$ contains no elements divisible by $t$.

This raises the following question.
\begin{ques}\label{maxsizeq}
For which $\LL$ do we have $\mu_{\LL}(n)=\max\{|I_n|,|T_n|\}$?
\end{ques}
When $\LL$ is $x+y=z$ it is easy to see that $\mu_{\LL}(n)=\lceil n/2 \rceil$ and the interval $I_n=[\floor{n/2}+1,n]$ is an extremal set of this size. Recently, the authors~\cite{HT1}  established that if $\LL$ is the equation $px+qy=z$ with $p,q \in \mathbb{N}$, $p\geq 2$, then $\mu_{\LL}(n)=n-\floor{n/(p+q)}$ for sufficiently large $n$. Again this bound is attained by the interval $I_n$. Our first result of this paper determines a further class of equations (of the form $px+qy=rz$) for which $I_n$ or $T_n$ gives an $\LL$-free set of maximum size.

\begin{thm}\label{Mu4}
Let $\LL$ denote the equation $px+qy=rz$ where $p\geq q \geq r$ and $p,q,r$ are fixed positive integers satisfying $\gcd(p,q,r)=1$. Let $t:=\gcd(p,q)$. Write $r_1:=p/t$ and $r_2:=q/t$.
\begin{enumerate}[(i)]
\item{If $q$ divides $p$ and $p+q \leq rq$ then $\mu_{\LL}(n)=\ceil{(q-1)n/q}$;}
\item{If $q$ divides $p$ and $p+q \geq rq$ then $\mu_{\LL}(n)=\ceil{(p+q-r)(n-a)/(p+q)}+a$ where $a$ is the unique non-negative integer $0 \leq a < q$ such that $n-a$ is divisible by $q$;}
\item{If $q$ does not divide $p$, $t>1$ and $$r>(r_1r_2-r_1-r_2+4)r_2\bigg(r_1+1+\frac{r_2-1}{r_1^2+(r_1-1)(r_2-1)}\bigg)$$ then $\mu_{\LL}(n)=\ceil{(t-1)n/t}$.}
\end{enumerate}
\end{thm}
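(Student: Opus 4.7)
The lower bounds follow at once from the constructions in the introduction: $|T_n| = \lceil (t-1)n/t \rceil$ matches the claimed value in (i) (where $t=q$) and (iii), and $|I_n| = a + \lceil (p+q-r)(n-a)/(p+q) \rceil$ matches the claim in (ii). The task is therefore to prove the matching upper bound in each case, which I would handle one case at a time.

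Cases (i) and (iii) share both the target $|A| \leq \lceil (t-1)n/t \rceil$ and the same skeleton. Because $\gcd(t,r) = \gcd(p,q,r) = 1$ and $t \mid px+qy$, every solution of $\LL$ satisfies $t \mid z$. Writing $M = A \cap t\mathbb{Z}$ and $N = A \setminus M$, it suffices to construct an injection $\phi \colon M \to [n]\setminus A$ whose image consists of non-multiples of $t$: this gives $|N| \leq |T_n| - |M|$, hence $|A| \leq |T_n|$. In case (i), $t=q$ and $r \geq r_1+1$, so the natural witness for $z = qk \in M$ is the non-trivial solution $(k, (r-r_1)k, qk)$, which lies in $[n]^3$ whenever $k \leq n/(r-r_1)$; the remaining $k$ are covered by the shifted variants $(k-j, (r-r_1)k + r_1 j, qk)$ obtained by translating along the kernel direction $(-1, r_1, 0)$ (which satisfies $-p + q r_1 = 0$ since $p = r_1 q$). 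One then selects $\phi(qk)$ among the non-multiples of $q$ appearing in such a witness and verifies injectivity. Translation-invariance of $\LL$ arises only when $p+q=r$, which forces $q=1$ by $\gcd(q,r)=1$ and is elementary. In case (iii), $r_1, r_2$ are coprime with $r_2 \nmid r_1$, giving more flexibility in the residues of $x, y$ modulo $t$; for each residue class $s \in \{1,\dots,t-1\}$ one produces an auxiliary non-trivial solution with an $x$- or $y$-coordinate in class $s$, then assembles the per-class partners into one global injection. Note that case (iii) automatically excludes translation-invariance, since $p+q=r$ would force $t \mid r$ and contradict $\gcd(t,r)=1$ with $t>1$.

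The main obstacle is case (iii): the explicit threshold $r > (r_1 r_2 - r_1 - r_2 + 4) r_2 (r_1 + 1 + (r_2 - 1)/(r_1^2 + (r_1-1)(r_2-1)))$ is exactly what guarantees that the auxiliary solutions fit inside $[n]^3$ uniformly across residue classes and values of $z$; the bulk of the proof will therefore be an optimisation over residue configurations, showing the stated coefficient is the tight one, followed by a careful simultaneous injectivity check across classes. Finally, for case (ii), the extremal set is $I_n$ and I would follow the approach of the authors' earlier paper~\cite{HT1}, which handled the subcase $r=1$. The key step is to show that any $\LL$-free $A$ with $|A| \geq |I_n|$ must be concentrated at the top of $[n]$: if $z_0 = \min A \leq \lfloor r(n-a)/(p+q)\rfloor$, then setting $z = z_0$ in $px+qy = rz$ yields a family of forbidden pairs $(x,y)$ with $px + qy = r z_0 \leq r(n-a)$ lying well inside $[n]^2$, and an element-counting argument shows that the number of exclusions this forces on $A$ exceeds the shortfall $|A| - |I_n|$, giving the required contradiction.
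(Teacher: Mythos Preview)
Your approach diverges from the paper's in all three cases. The paper argues uniformly via Corollary~\ref{Mu3}: for an $\LL$-free $S$, let $M$ be the \emph{largest} multiple of $t$ in $S$; Lemma~\ref{GM1} produces a matching of explicit size in the graph $G_M$ (vertices $[\lceil rM/q\rceil-1]$, edges $\{x,y\}$ with $px+qy=rM$), which bounds $|S\cap[\lceil rM/q\rceil-1]|$. Adding the trivial bound on elements above $M$ gives a single inequality in $M$, and each of (i)--(iii) is then a short analysis of that inequality: for (i) the hypothesis $p+q\le rq$ makes the bound at most $\lceil(q-1)n/q\rceil$ for every admissible $M$; for (ii) the bound is shown to be increasing as $M$ runs over multiples of $q$, hence maximised at $M=n-a$; for (iii) the displayed hypothesis on $r$ is precisely what makes the coefficient of $M$ in the bound negative enough that already $M\ge t$ forces the bound below $\lceil(t-1)n/t\rceil$.

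Your sketch for (ii) contains a concrete error: if $z_0=\min A$ and you set $z=z_0$ in $px+qy=rz$, then since $p,q\ge r$ every solution has $x,y\le rz_0/q\le z_0$, so $x,y<\min A$ and nothing is excluded from $A$ at all. To get exclusions you would need $z_0$ to play the role of $x$ or $y$, whereupon the forbidden partners are $z$-values, i.e.\ multiples of $t$, and you are back to the paper's set-up organised around the largest such multiple rather than the smallest element. For (i) your injection idea is plausible but incomplete: when $q\mid k$ both coordinates of your witness $(k,(r-r_1)k)$ are multiples of $q$ and hence outside $T_n$, and you have not explained how to choose among the shifted variants so that the map $qk\mapsto\phi(qk)$ is simultaneously injective across all $qk\in A$. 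For (iii) you assert the threshold on $r$ is what makes the auxiliary solutions fit in $[n]^3$, but in the paper it has a different origin: it is exactly the value above which the matching count of Lemma~\ref{GM1}, fed into Corollary~\ref{Mu3}, yields a bound decreasing fast enough in $M$; it is not clear your residue-class optimisation would recover the same constant.
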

Theorem~\ref{Mu4}(ii) was already proven (for large enough $n$) in~\cite{HT1} in the special case when $r=1$. (Note though that our work in~\cite{HT1} determines $\mu _{\LL}(n)$ for many equations $\LL$ not covered by Theorem~\ref{Mu4}.) Previously,
Hegarty~\cite{hegarty} proved Theorem~\ref{Mu4}(i) in the case when $p = q$. In Section~\ref{2LSFS} we also give a generalisation  of Theorem~\ref{Mu4} concerning some linear equations with more variables (see Corollary~\ref{Mu5}).

%Notice that in the case when $q$ divides $p$, Theorem~\ref{Mu4} gives a dichotomy for the value of $\mu _{\LL}(n)$: when $p+q \leq rq$ the set $A:=\{x\in [n] : x\not \equiv 0 \mod q\}$ is a largest
%$\LL$-free subset of $[n]$, whilst when $p+q \geq rq$ the interval $I:=[\floor{r(n-a)/(p+q)}+1,n]$ is a largest $\LL$-free subset of $[n]$. To see that $I$ is $\LL$-free observe that since $\gcd(p,q,r)=1$ and $\gcd(p,q)=q$, any solution $(x,y,z)$ to $\LL$ with $x,y,z \in I$ must have $z$ divisible by $q$. Since $px+qy>r(n-a)$, $z$ must lie in $[n-a+1,n]$ however then $z$ is not divisible by $q$ and so $I$ is $\LL$-free. Note that when $r=1$, $I:=[\floor{r(n-a)/(p+q)}+1,n]=[\floor{rn/(p+q)}+1,n]$, though this does not hold in general when $r>1$.\COMMENT{ For example, take $5x+5y=3z$ and $n=34$.}

Notice that in the case when $q$ divides $p$, Theorem~\ref{Mu4} gives a dichotomy for the value of $\mu_{\LL}(n)$: when $p+q \leq rq$ the set $T_n$ is a largest $\LL$-free subset of $[n]$, whilst when $p+q \geq rq$ the interval $I_n$ is a largest $\LL$-free subset of $[n]$. Theorem~\ref{Mu4} does not provide us with as much information for the case when $q$ does not divide $p$; note though it is not true that a similar dichotomy occurs in this case. Take the equation $3x+2y=2z$; here we have $|I_n| \approx 3n/5$ and $|T_n|=0$. However the set $A_n:=\{x \in [n]: x \not \equiv 0 $ mod $ 2$ or $x>2n/3\}$ has size $|A_n| \approx 2n/3$ and is $\LL$-free, since any solution $(x,y,z)$ to $\LL$ must have that $x$ is even and $x \leq 2n/3$. It would be very interesting to fully resolve the case where $p \geq q \geq r$ and $q$ does not divide $p$.

For equations $px+qy=rz$ where $r$ is bigger than $p,q$, there are a range of cases where an extremal set is known and it is neither $I_n$ nor $T_n$; see~\cite{baltz, dil, hegarty} for these, and also other results on the size of the largest $\LL$-free subset of $[n]$ for various $\LL$.

%%We also obtain analogous results for equations with more than three variables.

\subsection{The number of maximal solution-free sets}
Given a linear equation $\LL$, write $f(n,\LL)$  for the number of $\LL$-free subsets of $[n]$. Observe that all possible subsets of an $\LL$-free set are also $\LL$-free, and so $f(n,\LL) \geq 2^{\mu_{\LL}(n)}$. In fact, in general this trivial lower bound is not too far from the precise value of $f(n,\LL)$.
Indeed, Green~\cite{G-R} showed that given a homogeneous linear equation $\LL$, then $f(n, \LL)=2^{\mu_{\LL} (n)+o(n)}$ (where here the $o(n)$ may depend on $\LL$).
(Recall though, in the case when $\LL$ is translation-invariant $\mu_{\LL} (n)=o(n)$, so Green's theorem only tells us that $f(n, \LL)=2^{o(n)}$ for such $\LL$.) 
In the case of sum-free sets, Green~\cite{G-CE} and independently Sapozhenko~\cite{sap},  showed that there are constants $C_1$ and $C_2$ such that  $f(n, \LL)=(C_i+o(1))2^{n/2}$ for all $n \equiv i \mod 2$. 
This resolved a conjecture of Cameron and Erd\H{o}s~\cite{cam1}.

Far less is known about the number  $f_{\max} (n, \LL)$ of maximal $\LL$-free subsets of $[n]$. (We say that $A\subseteq [n]$ is a \emph{maximal $\LL$-free subset of $[n]$} if it is $\LL$-free and it is not properly contained in another $\LL$-free subset of $[n]$.) In the case when $\LL$ is $x+y=z$, Cameron and Erd\H{o}s~\cite{CE}  asked whether $f_{\max} (n,\LL) \leq f(n,\LL)/ 2^{\eps n}$ for some constant $\eps >0$;
 a few years later \L{u}czak and Schoen~\cite{ls} confirmed this to be true. 
After further progress on the problem~\cite{wolf, BLST}, Balogh, Liu, Sharifzadeh and Treglown~\cite{BLST2} proved the following sharp result for maximal sum-free sets: For each $1\leq i \leq 4$, there is a constant $C_i$ such that, given any $n\equiv i \mod 4$, $f_{\max}(n,\LL)=(C_i+o(1)) 2^{n/4}$.

 For other linear equations $\LL$, it is also natural to ask whether there are significantly fewer maximal $\LL$-free subsets of $[n]$ than there are $\LL$-free subsets.
In \cite{HT1} we showed that this is the case for all non-translation-invariant three-variable homogeneous equations $\LL$. In particular in this case $f_{\max}(n,\LL) \leq 3^{(\mu_{\LL}(n)-\mu^*_{\LL}(n))/3+o(n)}$ where $\mu^*_{\LL}(n)$ denotes the number of elements in $[n]$ which do not lie in \emph{any} non-trivial solution to $\LL$ that only consists of elements from $[n]$. We also gave other  upper bounds on $f_{\max}(n,\LL)$ for equations of the form $px+qy=z$ for fixed $p,q \in \mathbb N$ which in most cases yielded better bounds (see~\cite{HT1}).
 In this paper we prove the following result.
\begin{thm}\label{MainT1}
Let $\LL$ denote the equation $px+qy=rz$ where $p \geq q \geq r$ and $p,q,r$ are fixed positive integers satisfying $\gcd(p,q,r)=1$. Let $t:=\gcd(p,q)$. Then $$f_{\max}(n,\LL) \leq 2^{\frac{Crn}{q}+o(n)} \text{ where } C:=1-\frac{t}{p+q}\Big(\frac{p^2+(p-t)(q-t)}{p^2} \Big).$$
\end{thm}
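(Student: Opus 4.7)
The plan is to use the container method, via Green's arithmetic removal and container lemmas~\cite{G-R}. The first step is to extract a family $\mathcal{F}$ of subsets of $[n]$ with $|\mathcal{F}|\leq 2^{o(n)}$ such that every $\LL$-free subset of $[n]$ lies in some $F\in\mathcal{F}$, and each $F\in\mathcal{F}$ satisfies $|F|\leq\mu_{\LL}(n)+o(n)$. Since every maximal $\LL$-free set is in particular $\LL$-free, each such set lies in some container; it then suffices to bound the number of maximal $\LL$-free subsets of $[n]$ contained in a single $F$ and multiply by $|\mathcal{F}|$.

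For the second step I would analyse the structure of a container $F$. Containers with $|F|$ substantially smaller than $\mu_{\LL}(n)$ contribute at most $2^{\mu_{\LL}(n)-\Omega(n)}$ maximal sets in total, so we may assume $|F|\geq\mu_{\LL}(n)-o(n)$. The removal lemma then forces $F$ to be close in symmetric difference to an actual $\LL$-free set of near-extremal size; in view of Theorem~\ref{Mu4} and the discussion preceding it, the natural extremal candidates are $I_n$, $T_n$, and hybrid $A_n$-type sets, and up to $o(n)$ elements $F$ resembles one of these. For each structural type, the goal is to identify a subset of $F$ whose elements must lie in every maximal $\LL$-free set $M\subseteq F$; the remaining \emph{free} elements then parameterise the maximal sets contained in $F$.

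The technical heart of the proof is a counting argument showing that this set of free elements has size at most $\frac{Cr}{q}n+o(n)$ for every container. The key arithmetic input is that $\gcd(p,q,r)=1$ together with $t=\gcd(p,q)$ implies $\gcd(r,t)=1$, and hence every non-trivial solution $(x,y,z)$ to $px+qy=rz$ satisfies $t\mid z$. This restricts solutions in $F^3$ to a sparse sublattice; combining it with the near-$I_n$, near-$T_n$, or near-$A_n$ structure of $F$ and the relations $p=r_1t$, $q=r_2t$ yields a precise count of how many elements of $F$ can escape being forced by maximality. The constant $C=1-\frac{t}{p+q}\cdot\frac{p^2+(p-t)(q-t)}{p^2}$ arises as the worst case of this count over the different structural types. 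Multiplying the per-container bound $2^{Crn/q+o(n)}$ by $|\mathcal{F}|=2^{o(n)}$ gives the theorem.

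The main obstacle will be this final step: showing that within each structural type of container the number of free elements matches the stated constant. This requires a simultaneous analysis of the divisibility constraints modulo $t$, $r_1$, and $r_2$ together with the specific near-extremal shape of $F$, and the precise numerator $p^2+(p-t)(q-t)$ in $C$ reflects a delicate interaction between these constraints that a naive union bound would not capture.
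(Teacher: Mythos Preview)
Your outline diverges from the paper's argument at the second step, and the divergence is a genuine gap rather than an alternative route. The reduction ``containers with $|F|$ substantially smaller than $\mu_{\LL}(n)$ contribute at most $2^{\mu_{\LL}(n)-\Omega(n)}$'' does not help here: the target exponent $Crn/q$ is in general much smaller than $\mu_{\LL}(n)$, so bounding the non-near-extremal containers by $2^{\mu_{\LL}(n)-\Omega(n)}$ falls well short of the claim. More seriously, the stability step you invoke --- that a near-extremal $\LL$-free set must be close to $I_n$, $T_n$, or an $A_n$-type set --- is not established anywhere in the paper, and Theorem~\ref{Mu4} only determines $\mu_{\LL}(n)$ for a restricted class of equations, so there is no classification of near-extremal sets to appeal to in general.

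The paper's proof avoids stability entirely. After writing a container as $F=A\cup B$ with $|A|=o(n)$ and $B$ $\LL$-free, it lets $M$ be the largest multiple of $t$ in $B$, sets the threshold $u:=\max\{\lfloor rM/q\rfloor,\lfloor rn/(2q)\rfloor\}$, and splits $B$ at $u$. The constant $C$ arises not from comparing $F$ to extremal templates but directly from Lemma~\ref{MainL1} (equivalently the matching count in $G_M$, Lemma~\ref{GM1}), which bounds $|[u]\cap B|\le CrM/q+o(n)$ for any $\LL$-free $B$ containing $M$. The decisive second idea --- absent from your sketch --- is that the link graph $L_{S_1\cup S_2}[B']$ on $B':=[u+1,n]\cap B$ has only loops and isolated vertices (one checks that any genuine edge would force a $z\in B$ with $z>M$ and $t\mid z$, or $z>n$), so it has exactly one maximal independent set. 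Thus there is no ``free'' choice above $u$ at all; all the entropy sits in $[u]\cap B$, whose size is controlled by Lemma~\ref{MainL1}. Your proposal identifies neither the threshold split nor the link-graph-triviality argument, and without them there is no mechanism that produces the specific constant $C$.
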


For a wide class of equations $\LL$  this is the current best known upper bound on $f_{\max}(n,\LL)$; see the appendix for more details. %Theorems~\ref{max2} and~\ref{max3} from Chapter~\ref{Chap2} in all cases (see Proposition~\ref{best2}). There are certain linear equations of the form $px+qy=z$ where Theorem~\ref{max1} is still the current best result. 
In the case when $p=q\geq 2$ and $r=1$, the upper bound given by Theorem~\ref{MainT1} is actually exact up to the error term in the exponent.

\begin{thm}\label{qxqyz}
Let $\LL$ denote the equation $qx+qy=z$ where $q\geq 2$ is an integer. Then $$f_{\max}(n,\LL)=2^{n/2q+o(n)}.$$
\end{thm}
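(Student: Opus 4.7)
The upper bound is immediate from Theorem~\ref{MainT1}: specialising to $p=q$, $r=1$ gives $t=q$ and
\[
C \;=\; 1 - \frac{q}{2q}\cdot\frac{q^{2}+0}{q^{2}} \;=\; \tfrac12,
\]
so $f_{\max}(n,\LL)\le 2^{n/(2q)+o(n)}$. The substantive work is therefore to produce a matching construction of $2^{n/(2q)+o(n)}$ distinct maximal $\LL$-free subsets of $[n]$.

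The plan is to parametrise such sets by the subsets of a single interval of size $\sim n/(2q)$. Take $X := (n/(2q),\,\lfloor n/q\rfloor - 1]$, so that $|X| = n/(2q) - O(1)$. For each $B \subseteq X$ define
\[
U_B \;:=\; (n/q,n] \setminus \{q(y+1) : y \in B\}, \qquad S_B \;:=\; \{1\} \cup B \cup U_B,
\]
and let $M_B$ be the set obtained from $S_B$ by greedily adding elements of $[2,\lfloor n/(2q)\rfloor]$ (in any fixed order) whenever doing so preserves $\LL$-freeness. Since the greedy step never touches $X \cup (n/q,n]$, one has $M_B\cap X = B$ automatically, and hence distinct $B$ give distinct $M_B$.

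The crux is a short case analysis showing simultaneously that $S_B$ is $\LL$-free and that every element of $(X\setminus B)\cup\{q(y+1):y\in B\}$ is blocked by $S_B$. Every element of $B\cup U_B$ exceeds $n/(2q)$, so in any solution $qx+qy=z$ inside $S_B$ with $\min\{x,y\}>1$ one already has $qx+qy>n$. The case $x=y=1$ yields $z=2q\notin S_B$ for $n\ge 4q^{2}$; the case $x=1,\,y\in B$ yields $z=q(y+1)$, which is precisely the value removed from $U_B$; and $x=1,\,y\in U_B$ forces $qy>n$. The same analysis delivers the blockers: for $m\in X\setminus B$ the triple $(1,m,q(m+1))$ is a genuine solution with $1\in S_B$ and $q(m+1)\in U_B$ (since $m\notin B$, and the trimming of $X$ ensures $q(m+1)\le n$), so $m$ cannot be added to $M_B$; for $m=q(y+1)$ with $y\in B$ the blocker is the triple $(1,y,m)$.

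The main obstacle is the tight coupling of the roles of the element $1$: including $1\in S_B$ is essential to provide blockers via the map $m\mapsto q(m+1)$, but the very same map generates potential solutions $q\cdot 1 + q\cdot y = q(y+1)$ for $y\in B$, and this is exactly what forces us to delete $\{q(y+1):y\in B\}$ from the top interval; conversely, these deletions are precisely the elements we need to block subsequently. A handful of boundary integers (a bounded number near $n/q$ and near $2q$) contribute $O(1)$ error to $|X|$ and are absorbed into the $o(n)$ slack, delivering the lower bound $f_{\max}(n,\LL)\ge 2^{|X|}=2^{n/(2q)+o(n)}$.
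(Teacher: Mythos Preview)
Your upper bound is exactly as in the paper. For the lower bound, the paper simply invokes Proposition~26(iii) from~\cite{HT1}, which gives $f_{\max}(n,\LL) \geq 2^{(n-6q)/(2q)}$ via the link-graph machinery of Lemma~\ref{L6Ext} (exhibiting an induced matching in some $L_S[B]$). Your construction is essentially the same device with seed $S=\{1\}$, unpacked by hand: the pairs $\{m, q(m+1)\}$ for $m\in X$ are exactly the non-loop edges of $L_{\{1\}}[\,X\cup(n/q,n]\,]$, and they form an induced matching of size $|X|$. Presenting it directly lets the reader see the maximal sets themselves; routing through Lemma~\ref{L6Ext} instead would hand you maximality for free.

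That last point is where your write-up has a small gap. Your $M_B$ is obtained from $S_B$ by a greedy pass only over $[2,\lfloor n/(2q)\rfloor]$, and you then check that $X\setminus B$ and $\{q(y+1):y\in B\}$ are blocked. But this does not yet make $M_B$ maximal in $[n]$: the integer $\lfloor n/q\rfloor$ lies in none of $X$, $(n/q,n]$, or your greedy range, and when $q\nmid \lfloor n/q\rfloor$ it lies in no $\LL$-triple inside $[n]$ whatsoever (it cannot play $x$ or $y$ since $q(\lfloor n/q\rfloor +1)>n$, and it cannot play $z$ since $q\nmid \lfloor n/q\rfloor$), so it can always be appended. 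The fix is immediate and your blocking argument already does all the real work: just let $M_B$ be \emph{any} maximal $\LL$-free extension of $S_B$; since the blockers for $X\setminus B$ already live in $S_B$, one still has $M_B\cap X = B$, and the count $2^{|X|}$ follows. The greedy step is then unnecessary.
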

In Section~\ref{2MSFS} we will also generalise Theorem~\ref{MainT1} to consider some linear equations with more variables (see Theorem~\ref{MainT1Ext}).

For the proof of both Theorems~\ref{Mu4} and~\ref{MainT1}, a simple but crucial tool is a result (Lemma~\ref{GM1}) which ensures a certain auxiliary graph contains a large collection of disjoint edges.
 To prove Theorem~\ref{MainT1} we also make use of container and removal lemmas of Green~\cite{G-R} (see Section~\ref{21}).

In the next section  we collect together a number of useful tools and lemmas. We prove our results on the size of the largest solution-free subset of $[n]$, and on the number of maximal solution-free subsets of $[n]$, in Section~\ref{2LSFS} and~\ref{2MSFS} respectively.

\section{Containers, link hypergraphs and the main lemmas}\label{2tools}
%In the previous chapter we only stated Green's container and removal lemmas for three-variable equations, since that sufficed for our purposes there. As we now wish to study equations with more variables in this chapter, we state Green's results in full generality here.
\subsection{Container and removal lemmas}\label{21}

Observe that we can formulate the study of $\LL$-free sets in terms of independent sets in hypergraphs. Let $H$ denote the hypergraph with vertex set $[n]$ and edges corresponding to non-trivial solutions to $\LL$. Then an independent set in $H$ is precisely an $\LL$-free set. The method of containers roughly states that for certain (hyper)graphs $G$, the independent sets of $G$ lie only in a small number of subsets of $V(G)$ called \emph{containers}, where each container is an `almost independent set'. In general, the method of containers has had a wide number of applications to combinatorics and other areas;
the method for \emph{graphs} was first developed by Kleitman and Winston~\cite{kw1, kw2}. More recently, the \emph{hypergraph} container method was developed by
  Balogh, Morris and Samotij~\cite{BMS} and independently by Saxton and Thomason~\cite{ST}. In this section we introduce a container lemma of Green~\cite{G-R} for $\LL$-free sets. 

%Recently the method of \emph{containers} has proven powerful in tackling a range of problems in combinatorics and other areas, in particular due to the work of Balogh, Morris and Samotij~\cite{BMS} and Saxton and Thomason~\cite{ST}. 
%Roughly speaking this method states that for certain (hyper)graphs $G$, the independent sets of $G$ lie only in a small number of subsets of $V(G)$ called \emph{containers}, where each container is an `almost independent set'. 

%Recall that, given a three-variable linear equation $\LL$, an $\LL$-triple is a multiset $\{x,y,z\}$ which forms a solution to $\LL$. 
%Let $H$ denote the hypergraph with vertex set $[n]$ and edges corresponding to solutions to $\LL$.%-triples. 
%Then an independent set in $H$ is precisely an $\LL$-free set.
 
%The following container lemma is a result of Green (Proposition 9.1 of ~\cite{G-R}).
Lemma~\ref{L1Ext}(i)--(iii) is stated explicitly in Proposition 9.1 of~\cite{G-R}. Lemma~\ref{L1Ext}(iv) follows as an immediate consequence of Lemma~\ref{L1Ext}(i) and Lemma~\ref{L2Ext} below.

\begin{lemma}\label{L1Ext}
\cite{G-R} 
Fix a $k$-variable homogeneous linear equation $\LL$. There exists a family $\mathcal{F}$ of subsets of $[n]$ with the following properties:

\begin{enumerate}[(i)]
  \item{Every $F \in \mathcal{F}$ has at most $o(n^{k-1})$ solutions to $\LL$.}
  \item{If $S \subseteq [n]$ is $\LL$-free, then $S$ is a subset of some $F \in \mathcal{F}$.}
  \item{$|\mathcal{F}|=2^{o(n)}$.}
  \item{Every $F \in \mathcal{F}$ has size at most $\mu_\LL(n)+o(n)$.}
\end{enumerate}
\end{lemma}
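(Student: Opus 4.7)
The plan is to treat parts (i)--(iii) as a direct application of the hypergraph container method (this is how Green obtains Proposition 9.1 of~\cite{G-R}), and to deduce (iv) from (i) via Green's arithmetic removal lemma, stated here as Lemma~\ref{L2Ext}.

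For (i)--(iii), I would encode $\LL$-free sets as independent sets in a hypergraph $H$ on vertex set $[n]$ whose hyperedges are the (supports of) non-trivial solutions $(x_1,\dots,x_k)\in [n]^k$ to $\LL$. Since $\LL$ is a single linear equation in $k$ variables, $H$ has $\Theta(n^{k-1})$ hyperedges, and fixing any $\ell$ coordinates of a solution constrains the remaining $k-\ell$ coordinates to an affine subspace of dimension $k-1-\ell$. Consequently, the maximum $\ell$-degree in $H$ is $O(n^{k-1-\ell})$. This codegree control is exactly the balanced-supersaturation hypothesis required by the hypergraph container theorem of Balogh--Morris--Samotij and Saxton--Thomason, so applying the container theorem produces a family $\mathcal{F}$ with $|\mathcal{F}|=2^{o(n)}$ (property (iii)), in which every $\LL$-free subset of $[n]$ is contained in some $F\in\mathcal{F}$ (property (ii)), and such that each $F\in\mathcal{F}$ hosts at most $o(n^{k-1})$ solutions to $\LL$ (property (i)).

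For (iv), fix any $F\in\mathcal{F}$. By (i), $F$ contains only $o(n^{k-1})$ non-trivial solutions to $\LL$. The removal lemma (Lemma~\ref{L2Ext}) then guarantees the existence of an $\LL$-free subset $F'\subseteq F$ with $|F\setminus F'|=o(n)$. Since $F'$ is $\LL$-free we have $|F'|\leq \mu_{\LL}(n)$, and hence $|F|\leq \mu_{\LL}(n)+o(n)$, as required.

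The main obstacle is really just the codegree verification needed to invoke the container theorem, but for a single linear equation this is entirely standard because the solution set is an affine hyperplane in $\mathbb{Z}^k$. Once (i)--(iii) are in hand, the deduction of (iv) from the removal lemma is essentially immediate, so the heavy lifting in this lemma is all inherited from the machinery of~\cite{G-R}.
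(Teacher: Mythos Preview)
Your deduction of (iv) from (i) together with the removal lemma (Lemma~\ref{L2Ext}) is exactly what the paper does: it states that ``Lemma~\ref{L1Ext}(iv) follows as an immediate consequence of Lemma~\ref{L1Ext}(i) and Lemma~\ref{L2Ext}'' and gives no further argument, so your write-up of that step is spot on.

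For (i)--(iii) the paper does not give any argument at all; it simply cites Proposition~9.1 of~\cite{G-R}. Your container sketch is a perfectly valid route to the same conclusion, and the codegree check you outline is correct for a single linear equation. However, your parenthetical assertion that ``this is how Green obtains Proposition 9.1 of~\cite{G-R}'' is historically off: Green's paper~\cite{G-R} is from 2005 and predates the general hypergraph container theorems of Balogh--Morris--Samotij~\cite{BMS} and Saxton--Thomason~\cite{ST}. Green instead proves a Szemer\'edi-type regularity lemma for abelian groups and derives the container-like conclusion from that. So while your proof of (i)--(iii) is sound, it is a genuinely different (and in some ways more modern) argument than the one being cited; you should drop or amend the parenthetical rather than attribute the container method to Green.
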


We call the elements of $\mathcal{F}$ \emph{containers}. Observe that Lemma~\ref{L1Ext}(iv) gives a bound on the size of the containers in terms of $\mu_\LL(n)$, even in the case when $\mu_\LL(n)$ is not known.

%Throughout the paper we refer to the elements of $\mathcal{F}$ as \emph{containers}. Notice that Lemma~\ref{L1Ext}(iv) gives a bound on the size of the containers in terms of $\mu_\LL(n)$ even though, in general, the precise value of $\mu_\LL(n)$ is not known.

The following removal lemma is a special case of a result of Green (Theorem 1.5 in~\cite{G-R}). This result was also generalised to systems of linear equations by Kr\'al', Serra and Vena (Theorem 2 in ~\cite{ksv2}). 
\begin{lemma}\label{L2Ext}
\cite{G-R}  
Fix a $k$-variable homogeneous linear equation $\LL$. Suppose that $A \subseteq [n]$ is a set containing $o(n^{k-1})$ solutions to $\LL$. Then there exist $B$ and $C$ such that $A=B \cup C$ where $B$ is $\LL$-free and $|C|=o(n)$.
\end{lemma}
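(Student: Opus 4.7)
The plan is to deduce this arithmetic removal lemma from the combinatorial hypergraph removal lemma of Gowers and of Nagle--R\"odl--Schacht--Skokan, via the standard reduction of linear equations to counting simplices in an auxiliary hypergraph. Write $\LL$ as $a_1x_1+\dots+a_kx_k=0$ with $a_i\in\mathbb Z\setminus\{0\}$. First I would transfer the problem to a cyclic group: choose a prime $N$ with $Cn\le N\le 2Cn$ for a constant $C=C(\LL)$ large enough that any tuple $(x_1,\dots,x_k)\in A^k$ satisfying $\sum a_ix_i\equiv 0\pmod N$ automatically satisfies $\sum a_ix_i=0$ in $\mathbb Z$. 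Under this choice the number of $\LL$-solutions in $A$ is unchanged when we regard $A$ as a subset of $\mathbb Z/N\mathbb Z$, so it suffices to prove the statement inside this group.

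Next, build a $(k-1)$-uniform $k$-partite hypergraph $\mathcal H=\mathcal H(A)$ on parts $V_1,\dots,V_k$, each a copy of $\mathbb Z/N\mathbb Z$. For each $i\in[k]$ and each transversal $(v_j)_{j\ne i}$ of $\bigcup_{j\ne i} V_j$, include $\{v_j:j\ne i\}$ as an edge of $\mathcal H$ precisely when the unique $v_i\in V_i$ determined by $\sum_{j=1}^k a_jv_j\equiv 0\pmod N$ lies in $A$. A $K_k^{(k-1)}$-simplex in $\mathcal H$ --- a transversal $(v_1,\dots,v_k)$ every one of whose $(k-1)$-subsets is an edge --- then corresponds exactly to a tuple $(v_1,\dots,v_k)\in A^k$ that solves $\LL$. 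In particular, by hypothesis $\mathcal H$ contains only $o(n^{k-1})=o(N^{k-1})$ simplices.

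Apply the hypergraph removal lemma to obtain a set $E^\ast\subseteq E(\mathcal H)$ with $|E^\ast|=o(N^{k-1})$ whose deletion destroys every simplex. Each edge of $\mathcal H$ is naturally anchored at a single element of $A$ --- namely the value of the missing coordinate it encodes --- and each $a\in A$ is the anchor of exactly $kN^{k-2}$ edges; hence the set $C\subseteq A$ of anchors of edges in $E^\ast$ satisfies $|C|\le |E^\ast|/N^{k-2}=o(N)=o(n)$. Setting $B:=A\setminus C$, any $\LL$-solution in $B$ would yield a simplex of $\mathcal H$ surviving in $E(\mathcal H)\setminus E^\ast$, a contradiction; so $B$ is $\LL$-free, as required.

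The main obstacle is the careful setup of the auxiliary hypergraph: one must verify that the correspondence between simplices and $\LL$-solutions is exact (with trivial/degenerate transversals matched to trivial solutions), and that each element of $A$ anchors a controlled number of edges, so that the $o(N^{k-1})$-edge removal guaranteed by the hypergraph removal lemma translates quantitatively into an $o(n)$-element removal. The hypergraph removal lemma itself is invoked as a black box.
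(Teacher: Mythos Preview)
The paper does not prove this lemma at all; it is simply quoted from Green~\cite{G-R} (whose argument proceeds via an arithmetic regularity lemma and Fourier analysis, not via hypergraph removal). Your overall strategy---reducing to the hypergraph removal lemma---is the alternative route taken by Kr\'al', Serra and Vena~\cite{ksv2}, so the plan is legitimate. However, as written your argument has a genuine error at the central step.

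Your claimed bijection between $K_k^{(k-1)}$-simplices and solutions is false. In your hypergraph a transversal $(v_1,\dots,v_k)$ is a simplex precisely when, for every $i$, the element $v_i':=-a_i^{-1}\sum_{j\ne i}a_jv_j$ lies in $A$; nothing forces $v_i'=v_i$, i.e.\ $\sum_j a_jv_j=0$. Writing $s:=\sum_j a_jv_j$ one has $v_i'=v_i-a_i^{-1}s$, and (since $N$ is a large prime and $k$ is fixed, so $\gcd(k-1,N)=1$) the map $(v_i)\mapsto(v_i')$ is a bijection of $(\mathbb Z/N\mathbb Z)^k$ onto itself. Hence the number of simplices equals $|A|^k$, not the number of solutions in $A$. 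Concretely, with $\LL:x_1+x_2=2x_3$, $N=5$ and $A=\{1,2\}$, the transversal $(2,2,4)$ is a simplex (its three anchors are $1,1,2\in A$) although $4\notin A$ and $2+2\not\equiv 2\cdot 4\pmod 5$. In particular, when $|A|=\Theta(n)$ your hypergraph has $\Theta(N^k)$ simplices and the removal lemma yields nothing. There is also a second gap: even granting the correspondence, the inequality $|C|\le |E^\ast|/N^{k-2}$ does not follow from the fact that each $a\in A$ anchors $kN^{k-2}$ edges---a single removed edge already places its anchor in $C$, so your reasoning only gives $|C|\le|E^\ast|=o(N^{k-1})$.

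The fix is a different parametrisation: set $v_1=t$ and $v_{j+1}=v_j+a_jx_j$, so that $x_j=a_j^{-1}(v_{j+1}-v_j)$ with indices modulo $k$; then declare the edge missing $V_j$ to be present exactly when $x_{j+1}\in A$ (this is well defined from $(v_i)_{i\ne j}$ once $k\ge 3$). Now \emph{every} transversal automatically encodes a genuine solution together with the free parameter $t$, so simplices are in bijection with pairs (solution in $A^k$, $t$), giving $N\cdot o(N^{k-1})=o(N^k)$ simplices. Moreover, the edges of each colour are partitioned into classes of size $N^{k-2}$ indexed by elements of $A$, and the translation action $t\mapsto t+c$ permutes each class; using this symmetry one can convert the $o(N^{k-1})$-edge removal into the removal of $o(N)$ such classes per colour, which is the desired $o(n)$-element set $C$.
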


%We will also apply the following bound on the number of $\LL$-free sets.

%\begin{thm}\label{Gr}
%\cite{G-R}  Fix a homogeneous linear equation $\LL$. Then $f(n,\LL) = 2^{\mu_{\LL}(n)+o(n)}$.
%\end{thm}

We will use both the above results to obtain bounds on the number of maximal $\LL$-free sets.

\subsection{Link hypergraphs}
One can turn the problem of counting the number of maximal $\LL$-free subsets of $[n]$ into one of counting maximal independent sets in an auxiliary graph. Similar techniques were used in~\cite{wolf, BLST, BLST2, HT1}, and in the graph setting in~\cite{sar, BLPS}. To be more precise let $B$ and $S$ be disjoint subsets of $[n]$ and fix a three-variable linear equation $\LL$. The \emph{link graph $L_S[B]$ of $S$ on $B$} has vertex set $B$, and an edge set consisting of the following two types of edges:

\begin{enumerate}[(i)]
\item{ Two vertices $x$ and $y$ are adjacent if there exists an element $z \in S$ such that $\{x,y,z\}$ is an $\LL$-triple;}
\item{ There is a loop at a vertex $x$ if there exists an element $z \in S$ or elements $z,z' \in S$ such that  $\{x,x,z\}$ or $\{x,z,z'\}$ is an $\LL$-triple.}
\end{enumerate}
Here by  \emph{$\LL$-triple} we simply mean a multiset $\{x,y,z\}$ which forms a solution to $\LL$.

Consider the following generalisation of a link graph $L_S[B]$ to that of a \emph{link hypergraph}: Let $B$ and $S$ be disjoint subsets of $[n]$ and let $\LL$ denote the equation $p_1 x_1 + \cdots + p_k x_k=0$ where $p_1,\dots,p_k$ are fixed non-zero integers. The \emph{link hypergraph} $L_S[B]$ of $S$ on $B$ has vertex set $B$; It has an edge set consisting of hyperedges between $s\leq k$ distinct vertices $v_1,\dots,v_s$ of $B$, whenever there is a solution $(x_1,\dots,x_k)$ to $\LL$ in which $\{x_1,\dots,x_k\} \subseteq S \cup \{v_1,\dots,v_s\}$ and $\{v_1,\dots,v_s\} \subseteq \{x_1,\dots,x_k\}$. In  this definition one could have edges corresponding to trivial solutions. However in our applications, since we only consider non-translation-invariant equations, there are no trivial solutions.

The link graph lemmas used by the authors in~\cite{HT1} (Lemmas 12 and 15) can easily be extended to the hypergraph case.

\begin{lemma}\label{L4Ext}
Let $\LL$ denote a non-translation-invariant linear equation. Suppose that $B,S$ are disjoint $\LL$-free subsets of $[n]$. If $I \subseteq B$ is such that $S \cup I$ is a maximal $\LL$-free subset of $[n]$, then $I$ is a maximal independent set in the link hypergraph $L_{S}[B]$.\qed
\end{lemma}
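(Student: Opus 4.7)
The plan is to verify the two conditions for $I$ to be a maximal independent set in $L_S[B]$ directly from the definitions, using the hypothesis that $S \cup I$ is a maximal $\LL$-free subset of $[n]$. The fact that $\LL$ is non-translation-invariant is the crucial ingredient, because it means the link hypergraph has no edges coming from trivial solutions; equivalently, every solution witnesses a genuine $\LL$-triple, so the notions ``contains a solution'' and ``not $\LL$-free'' coincide for any subset of $[n]$.

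First I would verify independence. Suppose, for contradiction, that $I$ contains a hyperedge $\{v_1,\dots,v_s\}$ of $L_{S}[B]$. By the definition of the link hypergraph there is a solution $(x_1,\dots,x_k)$ to $\LL$ with $\{x_1,\dots,x_k\} \subseteq S \cup \{v_1,\dots,v_s\} \subseteq S \cup I$. Since $\LL$ is non-translation-invariant this solution is non-trivial, contradicting the assumption that $S \cup I$ is $\LL$-free.

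Next I would verify maximality. Suppose, for contradiction, that there exists $v \in B\setminus I$ such that $I \cup \{v\}$ is still independent in $L_{S}[B]$. I will show that $S \cup I \cup \{v\}$ is $\LL$-free, which contradicts the maximality of $S \cup I$. Take any non-trivial solution $(x_1,\dots,x_k)$ to $\LL$ with all $x_i \in S \cup I \cup \{v\}$. Since $S \cup I$ is $\LL$-free and $B,S$ are disjoint, the solution must use $v$, so in particular $\{x_1,\dots,x_k\} \cap (I \cup \{v\})$ is non-empty; let $\{v_1,\dots,v_s\}$ be the set of its distinct elements (so $v \in \{v_1,\dots,v_s\} \subseteq I\cup\{v\} \subseteq B$). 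Then $\{x_1,\dots,x_k\} \subseteq S \cup \{v_1,\dots,v_s\}$ and $\{v_1,\dots,v_s\} \subseteq \{x_1,\dots,x_k\}$, so by definition $\{v_1,\dots,v_s\}$ is a hyperedge of $L_{S}[B]$ contained in $I \cup \{v\}$. This contradicts independence of $I \cup \{v\}$.

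There is no real obstacle here: the argument is just a careful unpacking of the definition of $L_S[B]$, and the only subtle point is the use of non-translation-invariance to rule out trivial solutions in both directions of the argument (so that ``$\LL$-free'' is literally equivalent to ``contains no solution''). The proof carries over word-for-word from the three-variable link graph case treated in~\cite{HT1}.
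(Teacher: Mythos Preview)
Your proof is correct and is exactly the expected argument; in fact the paper does not give a proof at all but simply marks the lemma with a \qed, noting that the link graph lemmas from~\cite{HT1} ``can easily be extended to the hypergraph case.'' Your write-up is that easy extension, and your final remark that the argument carries over word-for-word from the three-variable case in~\cite{HT1} is precisely the justification the paper relies on.
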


Let MIS$(G)$ denote the number of maximal independent sets in $G$. The above result can be used in conjunction with the container lemma as follows. Let $F=A \cup B$ be a container as in Lemma~\ref{L1Ext}
where $|A|=o(n)$ and $B$ is $\LL$-free. Observe that any maximal $\LL$-free subset of $[n]$ in $F$ can be found by first selecting an $\LL$-free subset $S \subseteq A$, and then extending $S$ in $B$. Then the number of extensions of $S$ in $B$ is bounded by MIS$(L_S[B])$ by Lemma~\ref{L4Ext}. 

We can also use link graphs to obtain lower bounds.

\begin{lemma}\label{L6Ext}
Let $\LL$ denote a non-translation-invariant linear equation. Suppose that $B,S$ are disjoint $\LL$-free subsets of $[n]$. Let $H$ be an induced subgraph of the link graph $L_S[B]$. Then $f_{\max}(n,\LL) \geq \,$\rm MIS\em $(H)$. \qed
\end{lemma}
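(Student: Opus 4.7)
The plan is to build an injection from the maximal independent sets of $H$ into the maximal $\LL$-free subsets of $[n]$. Given a maximal independent set $I$ of $H$, I will first check that $S \cup I$ is itself $\LL$-free, and then extend it arbitrarily to a maximal $\LL$-free subset $M_I$ of $[n]$.

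To verify that $S \cup I$ is $\LL$-free, note that $S$ is $\LL$-free by hypothesis and that $I \subseteq V(H) \subseteq B$ inherits $\LL$-freeness from $B$. Hence any non-trivial solution inside $S \cup I$ must use elements from both sides. Since $\LL$ has three variables, such a solution $(x,y,z)$ falls into one of two cases: either exactly two coordinates lie in $I$ and one in $S$, or exactly one coordinate lies in $I$ and two in $S$. In the first case, if the two $I$-coordinates are distinct they form an edge of $L_S[B]$ of type (i); if they coincide they produce a loop of type (ii). In the second case, the single $I$-coordinate carries a loop of type (ii). Because $H$ is an induced subgraph of $L_S[B]$, any such outcome contradicts $I$ being independent in $H$, so $S \cup I$ is indeed $\LL$-free and admits an extension to some maximal $\LL$-free set $M_I \subseteq [n]$.

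For injectivity, suppose $I \ne I'$ are distinct maximal independent sets of $H$ with $M_I = M_{I'}$, so that $I \cup I' \subseteq M_I$. I claim $I \cup I'$ is independent in $H$: an edge between some $x \in I$ and $y \in I'$ would arise from a triple $\{x,y,z\}$ with $z \in S \subseteq M_I$, giving a non-trivial solution (using that $\LL$ is not translation-invariant) inside the $\LL$-free set $M_I$, a contradiction; loops on vertices of $I \cup I'$ are already ruled out by the independence of $I$ and $I'$ separately. Since distinct maximal independent sets are incomparable, $I \cup I' \supsetneq I$, contradicting the maximality of $I$. Hence $I \mapsto M_I$ is injective and $f_{\max}(n,\LL) \ge \mathrm{MIS}(H)$. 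The most delicate step is the case analysis in the previous paragraph, where each type of edge and each of the two kinds of loop in $L_S[B]$ must be matched to a potential mixed $\LL$-triple, and the non-translation-invariance of $\LL$ is what guarantees that every such solution is non-trivial.
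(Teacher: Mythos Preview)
Your argument is correct. The paper itself omits the proof of this lemma (note the bare \qed\ after the statement and the remark that the link-graph lemmas from \cite{HT1} ``can easily be extended''), so there is no proof here to compare against line by line; your construction---show $S\cup I$ is $\LL$-free, extend to a maximal $\LL$-free set, and recover $I$ as $M_I\cap V(H)$---is exactly the standard argument one would expect and is what underlies the cited lemma in \cite{HT1}.

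One small point of presentation: in your injectivity step it is cleaner to argue directly that $M_I\cap V(H)$ is independent in $H$ (any edge or loop there would yield a non-trivial solution inside $M_I$, since $S\subseteq M_I$), and then deduce $I=M_I\cap V(H)=I'$ from maximality. This avoids the slightly awkward case split ``$x\in I$, $y\in I'$'' and makes it transparent that the map $I\mapsto M_I$ is injective regardless of how the extensions are chosen.
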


\subsection{The main lemmas}

Here we use a specific link graph as a means to bound the  number of elements in a solution-free subset of $[n]$.

Let $\LL$ denote the equation $px+qy=rz$ where $p \geq q \geq r$ and $p,q,r$ are fixed positive integers satisfying $\gcd(p,q,r)=1$. Let $t:=\gcd(p,q)$ and write $r_1:=p/t$, $r_2:=q/t$. Fix $M \in [n]$ such that $M$ is divisible by $t$. We define the graph $G_M$ to have vertex set $[\ceil{rM/q}-1]$ and an edge between $x$ and $y$ whenever $px+qy=rM$. 

\begin{lemma}\label{GM1}
The graph $G_M$ contains a collection $E$ of vertex-disjoint edges where $$|E|=\left \lfloor \frac{rM}{r_2(p+q)} \right \rfloor +(r_1 r_2-r_1-r_2+1) \left \lfloor \left \lfloor \frac{rM}{r_1(p+q)}-\frac{1}{r_2} \right \rfloor \frac{1}{r_1 r_2} \right \rfloor$$ and at most one edge in $E$ is a loop.
\end{lemma}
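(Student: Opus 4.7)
The plan is to construct the required collection $E$ explicitly. Let $m := rM/t$. Since $\gcd(r_1,r_2)=1$, there is a unique $x_0\in[1,r_2]$ with $r_1 x_0 \equiv m \pmod{r_2}$; set $y_0 := (m-r_1 x_0)/r_2$. Then the edges of $G_M$ are precisely $e_k := \{x_0+kr_2,\, y_0-kr_1\}$ for $k=0,1,\dots,K$, where $K$ is the largest index with $y_0-Kr_1\ge 1$. Let $k^* := (y_0-x_0)/(r_1+r_2)$ be the ``midpoint'' parameter, at which $x_0+k^* r_2 = y_0-k^* r_1 = m/(r_1+r_2)$; the unique loop in $G_M$ arises precisely when $k^*$ is a non-negative integer.

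First I would take $E_1 := \{e_k : 0 \le k \le \floor{k^*}\}$. Each edge of $E_1$ has one endpoint $\le m/(r_1+r_2)$ and one endpoint $\ge m/(r_1+r_2)$, so two distinct edges of $E_1$ could share a vertex only at $m/(r_1+r_2)$, which forces both to be the loop. Hence $E_1$ is vertex-disjoint with at most one loop, and a short calculation using $x_0/r_2\in(0,1]$ gives $|E_1| \ge \floor{rM/(r_2(p+q))}$. For the second part, set $D := y_0-x_0$, define residues $a \equiv D r_2^{-1} \pmod{r_1}$ and $b \equiv D r_1^{-1} \pmod{r_2}$, and call an index $j$ \emph{good} if $j \not\equiv a \pmod{r_1}$ and $j \not\equiv b \pmod{r_2}$. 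Let $E_2 := \{e_j : \floor{k^*} < j \le K \text{ and } j \text{ is good}\}$.

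To verify that $E := E_1 \cup E_2$ is vertex-disjoint, observe first that any two ``large'' edges $e_j,e_{j'}$ with $j,j'>k^*$ satisfy $jr_2+j'r_1 > k^*(r_1+r_2)=D$ (and symmetrically), so the shared-vertex equations $jr_2+j'r_1=D$ and $j'r_2+jr_1=D$ both fail. Next, a large $e_j$ and a small $e_k$ can share a vertex only if $jr_2+kr_1=D$ or $jr_1+kr_2=D$; each of these forces $j\equiv a\pmod{r_1}$ or $j\equiv b\pmod{r_2}$ respectively, contradicting the goodness of $j$. Finally, by the Chinese remainder theorem and inclusion-exclusion, every $r_1 r_2$ consecutive integers contain exactly $r_1 r_2-r_1-r_2+1=(r_1-1)(r_2-1)$ good indices, so $|E_2| \ge (r_1 r_2-r_1-r_2+1)\floor{(K-\floor{k^*})/(r_1 r_2)}$. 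The main obstacle is a floor-arithmetic step showing $K-\floor{k^*} \ge \floor{rM/(r_1(p+q)) - 1/r_2}$; this follows from $\floor{A}-\floor{B}\ge\floor{A-B}$ applied to the defining expressions of $K$ and $k^*$, combined with $1/r_1 \le 1/r_2$ (since $r_1 \ge r_2$). Summing the lower bounds for $|E_1|$ and $|E_2|$ then yields the claimed bound on $|E|$.
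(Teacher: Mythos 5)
Your proposal is correct and follows essentially the same route as the paper: the same parametrisation of the edges of $G_M$ by an arithmetic progression, the same split into the edges straddling $rM/(p+q)$ and the "good" edges beyond it that avoid one residue class mod $r_1$ and one mod $r_2$, and the same Chinese-remainder count of $(r_1-1)(r_2-1)$ good indices per window of length $r_1r_2$. Your explicit index/midpoint bookkeeping and the $\lfloor A\rfloor-\lfloor B\rfloor\ge\lfloor A-B\rfloor$ step are just a slightly different (and valid) way of carrying out the paper's floor arithmetic.
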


\begin{proof}
All edges in $G_M$ are pairs of the form $\{s,(rM-sp)/q\}$ for some $s \in \mathbb{N}$ since $ps+q(rM-sp)/q=rM$. Since $p=r_1 t$ and $q=r_2 t$ where $r_1$ and $r_2$ are coprime, for a fixed integer $s$ precisely one element in $\{(rM-(s-j)p)/q : 0 \leq j < r_2 \}$ is an integer. (Note here we are using that $M$ is divisible by $t$.) In other words there exists a unique $x \in \mathbb{N}$, $1 \leq x \leq r_2$ such that $(rM-xp)/q$ is an integer, and all edges in $G_M$ are of the form $\{x+ar_2,(rM-xp)/q-ar_1\}$ for some non-negative integer $a$. In particular there is an edge provided $a$ satisfies $(rM-xp)/q-ar_1 \in \mathbb{N}$. 

Write $y:=(rM-xp)/q$. Note that if $x+ar_2 \leq rM/(p+q)$ then $$y-ar_1 = \frac{rM-(x+ar_2)p}{q} \geq \frac{rM-prM/(p+q)}{q} = \frac{rM}{p+q}.$$ Hence there are $\floor{rM/(r_2(p+q))}$ distinct edges in $G_M$ of the form $\{x+ar_2,y-ar_1\}$ with $x+ar_2 \leq rM/(p+q) \leq y-ar_1$. Note that one of these edges may be a loop. (This will be at $rM/(p+q)$ in the case when $rM/(p+q) \in \mathbb{N}$.) Call this collection of edges $E_1$. Our next aim is to find an additional collection $E_2$ of edges in $G_M$ that is vertex-disjoint from $E_1$.

Note that $x+ar_2 \equiv x$ mod $ r_2$ and $y-ar_1 \equiv y$ mod $ r_1$. Also $p(rM/p)+q(0)=rM$ and $\ceil{rM/p} \leq \ceil{rM/q}$, hence there are at least $$\left \lfloor \bigg( \left \lceil \frac{rM}{p} \right \rceil - 1 - \left \lfloor \frac{rM}{p+q} \right \rfloor \bigg) /r_2 \right \rfloor \geq \left \lfloor \bigg( \frac{rM}{p} - \frac{rM}{p+q}-1 \bigg)/r_2 \right \rfloor =\left \lfloor \frac{rM}{r_1(p+q)} - \frac{1}{r_2} \right \rfloor$$ edges in $G_M$ of the form $\{x+ar_2,y-ar_1\}$ with $x+ar_2 > rM/(p+q)$. Consider a set of $r_1 r_2$ edges $\{ \{x+ar_2+br_2,y-ar_1-br_1\} : 0 \leq b < r_1 r_2\}$ for a fixed $a$. Since $r_1$ and $r_2$ are coprime, precisely $r_2$ of these edges (1 in $r_1$ of them) have $x+ar_2+br_2 \equiv y$ mod $r_1$, and precisely $r_1$ of these edges have $y-ar_1-br_1 \equiv x$ mod $r_2$. (Also, precisely 1 edge satisfies both.) In all other cases since $x+ar_2+br_2 \not \equiv y$ mod $r_1$ and $y-ar_1-br_1 \not \equiv x$ mod $r_2$, the edge $\{x+ar_2+br_2,y-ar_1-br_1\}$ is vertex-disjoint from $E_1$. Hence we obtain a set $E_2$ of at least $(r_1 r_2-r_1-r_2+1) \floor{\floor{rM/(r_1(p+q))-1/r_2}/(r_1 r_2)}$ additional distinct edges. Thus $E:=E_1 \cup E_2$ is our desired set.
\end{proof}

Observe that the graph $G_M$ is a link graph $L_S[B]$, where $S:=\{M\}$ and $B:=[\ceil{rM/q}-1]$. If we wish to extend a solution-free set $S$ into a solution-free subset of $S \cup B$, then we must pick an independent set in $L_S[B]$. Similarly here if we wish to obtain a solution-free subset of $[n]$ which contains $M$ divisible by $t$, then we must pick an independent set in $G_M$. This is the idea behind the following key lemma, which allows us to bound the number of elements in such an $\LL$-free set. 

\begin{lemma}\label{MainL1}
Let $\LL$ denote the equation $px+qy=rz$ where $p \geq q \geq r$ and $p,q,r$ are fixed positive integers satisfying $\gcd(p,q,r)=1$. Let $t:=\gcd(p,q)$ and write $r_1:=p/t$ and $r_2:=q/t$. Let $S$ be an $\LL$-free subset of $[n]$. If $M \in S$ is divisible by $t$, then $S$ contains at most %$\floor{rM/q}-\floor{rM/(r_2(p+q))}-(r_1 r_2-r_1-r_2+1) \floor{ \floor{rM/(r_1(p+q))-1/r_2}/(r_1 r_2)}$ elements from $[\floor{rM/q}]$.
$$\left \lceil \frac{rM}{q} \right \rceil -1 - \left \lfloor \frac{rM}{r_2(p+q)} \right \rfloor -(r_1 r_2-r_1-r_2+1) \left \lfloor \left \lfloor \frac{rM}{r_1(p+q)}-\frac{1}{r_2} \right \rfloor \frac{1}{r_1 r_2} \right \rfloor$$ elements from $[\ceil{rM/q}-1]$.
\end{lemma}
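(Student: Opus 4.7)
The plan is to reduce the statement to a direct application of Lemma~\ref{GM1}. First I would note that since $p \geq q \geq r \geq 1$ one has $p+q > r$, so $\LL$ is not translation-invariant and hence has no trivial solutions; the $\LL$-freeness of $S$ therefore just says that $S$ contains no solution at all. A similar elementary observation is that $q \geq r$ forces $M \geq rM/q$, so since $M$ is an integer we have $M \geq \ceil{rM/q}$, and in particular $M$ is \emph{not} a vertex of $G_M$.

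The key step is to argue that $I := S \cap [\ceil{rM/q}-1]$ is an independent set in the graph $G_M$ introduced just before Lemma~\ref{GM1}. Every edge $\{x,y\}$ of $G_M$ is witnessed by $px+qy = rM$, so if both $x$ and $y$ lay in $S$ then, together with $M \in S$, the triple $(x,y,M)$ would be a solution to $\LL$ lying entirely in $S$, contradicting $\LL$-freeness. Loops occur only at $x = rM/(p+q)$ when this quantity is an integer, and the presence of such a vertex $x$ in $I$ would produce the forbidden solution $(x,x,M)$ with all coordinates in $S$. Thus no edge of $G_M$ has both endpoints in $I$, i.e., $I$ is independent.

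To conclude I would apply Lemma~\ref{GM1} to obtain a collection $E$ of vertex-disjoint edges of $G_M$ containing at most one loop. Writing $\ell \in \{0,1\}$ for the number of loops in $E$, we have $|V(E)| = 2|E| - \ell$, while any independent set uses at most one vertex of each non-loop edge and no vertex of the loop, giving $|I \cap V(E)| \leq |E| - \ell$. Combining these bounds yields
$$|I| \;\leq\; |I \cap V(E)| + |V(G_M)| - |V(E)| \;\leq\; (|E|-\ell) + (\ceil{rM/q}-1) - (2|E|-\ell) \;=\; \ceil{rM/q} - 1 - |E|,$$
and substituting the lower bound on $|E|$ from Lemma~\ref{GM1} delivers exactly the claimed inequality. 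The only nuance in the argument is the careful bookkeeping of the possible loop, which costs nothing because it simultaneously removes one vertex from $V(E)$ and from the set of vertices an independent set could have used; beyond this, the proof is essentially immediate from Lemma~\ref{GM1}.
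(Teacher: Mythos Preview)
Your proof is correct and follows essentially the same approach as the paper: both arguments observe that $S \cap V(G_M)$ must be independent in $G_M$ and then invoke Lemma~\ref{GM1} to lose one vertex per edge in the matching $E$. The paper's version is terser, simply asserting that a set of $J$ vertex-disjoint edges forces $|S \cap [\ceil{rM/q}-1]| \leq \ceil{rM/q}-1-J$; your explicit bookkeeping with the loop parameter $\ell$ (and the remark that $M \notin V(G_M)$) just fills in details the paper leaves implicit.
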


\begin{proof}
Consider the graph $G_M$ and observe that its edges correspond to $\LL$-triples: since $p \geq q \geq r$ there is an edge between $x$ and $y$ precisely when $\{x,y,M\}$ is an $\LL$-triple. Hence if $I \subseteq V(G_M)$ is such that $I \cup \{M\}$ is an $\LL$-free subset of $[n]$ then $I$ is an independent set in $G_M$. As a consequence if we find a set of vertex-disjoint edges in $G_M$ of size $J$, then $S$ contains at most $\ceil{rM/q}-1-J$ elements from $[\ceil{rM/q}-1]$. The result then follows by applying Lemma~\ref{GM1}. %Hence if we have a set of $J$ vertex-disjoint edges in $G_M$, then we must remove at least $J$ elements from $S$ (all within $[\floor{rM/q}]$) in order for $S$ to be an $\LL$-free set containing $M$. The result then follows by applying Lemma~\ref{GM1}.
\end{proof} 

First note that if $\LL$ denotes the equation $x+y=z$, then in Lemma~\ref{MainL1} we are simply saying that if a sum-free set $S$ contains $M$, then it cannot contain both $1$ and $M-1$, it cannot contain both $2$ and $M-2$, and so on. So in a sense this lemma is a generalisation of the proof that sum-free subsets of $[n]$ cannot contain more than $\ceil{n/2}$ elements.

Let $\LL$ denote the equation $px+qy=rz$ where $p \geq q \geq r$ and $p,q,r$ are fixed positive integers satisfying $\gcd(p,q,r)=1$ and let $t:=\gcd(p,q)$. Recall that $T_n:=\{x \in [n]: x \not \equiv 0 $ mod $ t\}$ is $\LL$-free. Lemma~\ref{MainL1} roughly implies that every $\LL$-free subset of $[n]$ must have `not too many small elements' or must `look like' $T_n$. Clearly this lemma gives rise to an upper bound on the size of the largest $\LL$-free subset of $[n]$. In Section~\ref{2MSFS} we also show that  this lemma can be used to obtain an upper bound on the number of maximal $\LL$-free subsets of $[n]$.

%Observe the following fact: 
%\begin{fact}\label{Mu6}
%If the equation $\LL_1$ is a positive integer multiple of the equation $\LL_2$, then all results that hold for $\LL_1$ hold for $\LL_2$ and vice versa.
%\end{fact}

%We will also make use of the following completely trivial fact.

%\begin{fact}\label{Mu6}
%Suppose $\LL_1$ is a linear equation and $\LL_2$ is a positive integer multiple of $\LL_1$. Then the set of $\LL_1$-free subsets of $[n]$ is precisely the set of $\LL_2$-free subsets of $[n]$. In particular $\mu_{\LL_1}(n)=\mu_{\LL_2}(n)$, $f(n,\LL_1)=f(n,\LL_2)$ and $f_{\max}(n,\LL_1)=f_{\max}(n,\LL_2)$. 
%\end{fact}

%We only state Fact~\ref{Mu6} as sometimes it will be convenient to consider equations $px+qy=rz$ where $\gcd(p,q,r) \not = 1$.

%This follows since all solutions to $\LL_1$ are solutions to $\LL_2$ and vice versa. As a consequence, if for example we give structural results for an equation $px+qy=rz$ where $\gcd(p,q,r)=1$, then we can state the same results for the equation $(up)x+(uq)y=(ur)z$ for any $u \in \mathbb{N}$. 

The following simple proposition allows us to extend our results for linear equations with three variables to linear equations with more than three variables. 
%We can easily extend this result to linear equations with more than three variables through use of the following key proposition.

%\begin{prop}\label{Mu1}
%Let $\LL_1$ denote the equation $p_1x_1+\dots +p_k x_k =b$ where $p_1, \dots ,p_k ,b \in \mathbb{Z}$ and let $\LL_2$ denote the equation $(p_1+p_2)x_1+p_3 x_2 +\dots +p_k x_{k-1} =b$. Then $\mu_{\LL_1}(n) \leq \mu_{\LL_2}(n)$ and $f(n,\LL_1) \leq f(n,\LL_2)$.
%\end{prop}

\begin{prop}\label{Mu1} %\bf{\ref{Mu1}}\rm\em. %[\ref{Mu1}.]
Let $\LL_1$ denote the equation $p_1x_1+\dots +p_k x_k =b$ where $p_1, \dots ,p_k ,b \in \mathbb{Z}$ and let $\LL_2$ denote the equation $(p_1+p_2)x_1+p_3 x_2 +\dots +p_k x_{k-1} =b$. Then $\mu_{\LL_1}(n) \leq \mu_{\LL_2}(n)$.
\end{prop}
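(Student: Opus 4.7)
The plan is to prove the contrapositive-style statement that every $\LL_1$-free subset of $[n]$ is automatically $\LL_2$-free; the inequality $\mu_{\LL_1}(n)\leq \mu_{\LL_2}(n)$ then follows at once. Equivalently, I would show that any non-trivial solution $(y_1,\dots,y_{k-1})$ to $\LL_2$ lying in a set $A\subseteq[n]$ gives rise to a non-trivial solution $(x_1,\dots,x_k)$ to $\LL_1$ lying in $A$.

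The construction is the obvious one: set $x_1=x_2=y_1$ and $x_j=y_{j-1}$ for $3\leq j\leq k$. A direct calculation,
$$p_1x_1+p_2x_2+\sum_{j=3}^k p_j x_j=(p_1+p_2)y_1+\sum_{i=2}^{k-1}p_{i+1}y_i=b,$$
shows that $(x_1,\dots,x_k)$ indeed solves $\LL_1$, and all of these values lie in $A$. If $\LL_1$ is not translation-invariant there are no trivial solutions, so we are done. Otherwise $\sum_i p_i=b=0$, which also gives $(p_1+p_2)+p_3+\dots+p_k=b=0$, so $\LL_2$ is translation-invariant as well, and the notion of triviality is meaningful on both sides.

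Assume then for contradiction that $(x_1,\dots,x_k)$ is a trivial solution to $\LL_1$, witnessed by a partition $Q_1,\dots,Q_m$ of $[k]$. I would build a partition $R_1,\dots,R_{m'}$ of $[k-1]$ witnessing triviality of $(y_1,\dots,y_{k-1})$ for $\LL_2$, distinguishing two cases. If $1$ and $2$ lie in the same class $Q_s$, replace $Q_s$ by $\{1\}\cup\{j-1:j\in Q_s,\ j\geq 3\}$ and each other $Q_r$ by $\{j-1:j\in Q_r\}$; the coefficient sum of the modified class becomes $(p_1+p_2)+\sum_{j\in Q_s,\,j\geq 3}p_j=\sum_{j\in Q_s}p_j=0$, and the other sums are unchanged. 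If instead $1\in Q_s$ and $2\in Q_t$ with $s\neq t$, merge these two classes into $\{1\}\cup\{j-1:j\in(Q_s\cup Q_t)\setminus\{1,2\}\}$ and shift the other classes as before; the coefficient sum of the merged class is $(p_1+p_2)+\sum_{j\in Q_s\setminus\{1\}}p_j+\sum_{j\in Q_t\setminus\{2\}}p_j=0$. In either case, equality of $y$-values within each class follows from the corresponding equality of $x$-values, using $x_1=x_2=y_1$ to handle the merged position.

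The one step requiring care is the second case, where indices $1$ and $2$ lie in different classes of $Q$ and must be amalgamated; the remaining verifications are straightforward bookkeeping of coefficients and indices.
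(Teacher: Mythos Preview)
Your argument is correct and follows the same core idea as the paper: any solution $(y_1,\dots,y_{k-1})$ to $\LL_2$ yields the solution $(y_1,y_1,y_2,\dots,y_{k-1})$ to $\LL_1$, so $\LL_1$-free implies $\LL_2$-free. The paper's proof stops there in two lines and does not address triviality at all. Your proof goes further and verifies that a \emph{non-trivial} $\LL_2$-solution maps to a \emph{non-trivial} $\LL_1$-solution, by showing (contrapositively) that any partition witnessing triviality of $(y_1,y_1,y_2,\dots,y_{k-1})$ for $\LL_1$ can be collapsed to one witnessing triviality of $(y_1,\dots,y_{k-1})$ for $\LL_2$; your case split on whether indices $1$ and $2$ share a class, and the merging in the second case, are handled correctly. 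This extra care is only needed when $\LL_1$ (and hence $\LL_2$) is translation-invariant, which never arises in the paper's applications, so the paper can afford to omit it; your version, however, makes the proposition valid as stated in full generality.
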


\begin{proof}
If $(p_1+p_2)x_1+p_3 x_2 +\dots +p_k x_{k-1} =b$ for some $x_i \in [n]$, $1 \leq i \leq k-1$, then $p_1 x_1 +p_2 x_1 +p_3 x_2 +\dots +p_k x_{k-1} =b$. Hence any solution to $\LL_2$ in $[n]$ gives rise to a solution to $\LL_1$ in $[n]$. So if $A \subseteq [n]$ is $\LL_1$-free, then $A$ is also $\LL_2$-free. Hence the size of the largest $\LL_2$-free set is at least the size of the largest $\LL_1$-free set.
\end{proof}
We will also make use of the following  trivial fact.

\begin{fact}\label{Mu6}
Suppose $\LL_1$ is a linear equation and $\LL_2$ is a positive integer multiple of $\LL_1$. Then the set of $\LL_1$-free subsets of $[n]$ is precisely the set of $\LL_2$-free subsets of $[n]$. In particular $\mu_{\LL_1}(n)=\mu_{\LL_2}(n)$, $f(n,\LL_1)=f(n,\LL_2)$ and $f_{\max}(n,\LL_1)=f_{\max}(n,\LL_2)$. 
\end{fact}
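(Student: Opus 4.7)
The plan is to unwind the definitions and observe that scaling by a positive integer preserves every condition that appears in the definition of ``$\LL$-free''. Write $\LL_1$ in the form $a_1 x_1 + \dots + a_k x_k = b$ and $\LL_2$ in the form $c a_1 x_1 + \dots + c a_k x_k = cb$ for some positive integer $c$.

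First I would show that a tuple $(x_1,\dots,x_k) \in [n]^k$ satisfies $\LL_1$ if and only if it satisfies $\LL_2$: one direction is obtained by multiplying both sides of $\LL_1$ by $c$, the other by dividing both sides of $\LL_2$ by $c$ (which is legal because $c \neq 0$). So $\LL_1$ and $\LL_2$ have exactly the same solution set in $[n]^k$.

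Next I would check that the notions of ``translation-invariant'' and ``trivial solution'' agree for the two equations. Since $\sum_i a_i = b = 0$ is equivalent to $\sum_i c a_i = cb = 0$, the equation $\LL_1$ is translation-invariant precisely when $\LL_2$ is. For a partition $P_1,\dots,P_\ell$ of $[k]$, the condition $\sum_{i \in P_r} a_i = 0$ is likewise equivalent to $\sum_{i \in P_r} c a_i = 0$. Hence the trivial solutions of $\LL_1$ are exactly the trivial solutions of $\LL_2$, and therefore the non-trivial solutions coincide as well. Combining this with the previous paragraph, a set $A \subseteq [n]$ contains a non-trivial solution to $\LL_1$ if and only if it contains a non-trivial solution to $\LL_2$, so the $\LL_1$-free subsets of $[n]$ are exactly the $\LL_2$-free subsets of $[n]$.

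The three numerical equalities follow immediately: $\mu_{\LL_1}(n)$ and $\mu_{\LL_2}(n)$ are maxima of $|A|$ over the same family of sets; $f(n,\LL_1)$ and $f(n,\LL_2)$ count the same family; and maximality only depends on the family itself, so $f_{\max}(n,\LL_1) = f_{\max}(n,\LL_2)$. There is no real obstacle here — the only point that needs a moment's thought is the verification that the definition of a trivial solution is preserved under scaling, which is what I have done above.
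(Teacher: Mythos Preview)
Your argument is correct and complete. The paper itself gives no proof of this fact (it is simply labelled a ``trivial fact''), and your verification that scaling preserves the solution set, translation-invariance, and the notion of trivial solution is exactly the routine check one would make to justify it.
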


%We only state Fact~\ref{Mu6} as sometimes it will be convenient to consider equations $px+qy=rz$ where $\gcd(p,q,r) \not = 1$.
%For the first example of the use of this proposition, observe the following lemma.

The two results above  allow us to extend the use of Lemma~\ref{MainL1} to equations with more than three variables. 

\begin{lemma}\label{MainL1Ext}
Let $\LL$ denote the equation $p_1 x_1 + \cdots  + p_k x_k=0$ where $p_i \in \mathbb{Z}$. Suppose there is a partition of the $p_i$ into three non-empty parts $P_1$, $P_2$ and $P_3$ where $p':=\sum_{p_j \in P_1} p_j$, $q':=\sum_{p_j \in P_2} p_j$ and $r':=-\sum_{p_j \in P_3} p_j$ satisfy $p' \geq q' \geq r' \geq 1$. Let $t':=\gcd(p',q',r')$ and write $p:=p'/t'$, $q:=q'/t'$ and $r:=r'/t'$. Let $t:=\gcd(p,q)$ and write $r_1:=p/t$ and $r_2:=q/t$. Let $S$ be an $\LL$-free subset of $[n]$. If $M \in S$ is divisible by $t$, then $S$ contains at most $$\left \lceil \frac{rM}{q} \right \rceil - 1 - \left \lfloor \frac{rM}{r_2(p+q)} \right \rfloor -(r_1 r_2-r_1-r_2+1) \left \lfloor \left \lfloor \frac{rM}{r_1(p+q)}-\frac{1}{r_2} \right \rfloor \frac{1}{r_1 r_2} \right \rfloor$$ elements from $[\ceil{rM/q}-1]$.
\end{lemma}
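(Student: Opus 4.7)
The plan is to reduce the $k$-variable statement to the three-variable Lemma~\ref{MainL1} by collapsing the variables of $\LL$ according to the partition $P_1, P_2, P_3$. Let $\LL'$ denote the three-variable equation $p'x+q'y=r'z$, and let $\LL''$ denote $px+qy=rz$ (the equation obtained from $\LL'$ by dividing through by $t'$). The goal is to show that $S$ is $\LL''$-free, and then to quote Lemma~\ref{MainL1} with the given $M$.

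First I would show that any $\LL$-free set $S \subseteq [n]$ is automatically $\LL'$-free. Given a solution $(x,y,z)$ to $\LL'$ with entries in $S$, define $x_i := x$ if $p_i \in P_1$, $x_i := y$ if $p_i \in P_2$, and $x_i := z$ if $p_i \in P_3$. Then
\[
\sum_{i=1}^{k} p_i x_i \;=\; \Big(\sum_{p_j \in P_1} p_j\Big)x + \Big(\sum_{p_j \in P_2} p_j\Big)y + \Big(\sum_{p_j \in P_3} p_j\Big)z \;=\; p'x + q'y - r'z \;=\; 0,
\]
so $(x_1,\dots,x_k)$ is a solution to $\LL$ with entries in $S$. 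Crucially, the hypothesis $p'\geq q'\geq r'\geq 1$ forces $\sum_i p_i = p'+q'-r' \geq r' \geq 1 > 0$, so $\LL$ is not translation-invariant and therefore has no trivial solutions. Hence the solution $(x_1,\dots,x_k)$ is non-trivial, contradicting the $\LL$-freeness of $S$. Thus $S$ is $\LL'$-free.

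Next, since $\LL'$ is just $t'$ times $\LL''$, Fact~\ref{Mu6} gives that $S$ is $\LL''$-free. The equation $\LL''$ satisfies the hypotheses of Lemma~\ref{MainL1}: $p \geq q \geq r$ are positive integers with $\gcd(p,q,r)=1$ (since we divided out $t'$), $t=\gcd(p,q)$, and the quantities $r_1=p/t$, $r_2=q/t$ match the statement. Since $M \in S$ is divisible by $t$, Lemma~\ref{MainL1} applied to $S$ and $M$ immediately yields the desired bound on the number of elements of $S$ in $[\lceil rM/q\rceil-1]$.

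There is no real obstacle here — the entire argument is a bookkeeping reduction. The only point that genuinely needs to be checked (rather than just written down) is the verification that $\LL$ is not translation-invariant, which legitimises pulling back solutions from $\LL'$ to $\LL$ without worrying about the partition-based notion of triviality; this follows from the strict positivity of $p'+q'-r'$ guaranteed by the hypothesis $p' \geq q' \geq r' \geq 1$.
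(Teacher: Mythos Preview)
Your proof is correct and follows essentially the same route as the paper: reduce to the three-variable equation $px+qy=rz$ and invoke Lemma~\ref{MainL1}. The paper's version is more terse, citing Proposition~\ref{Mu1} (repeatedly, to collapse one variable at a time) together with Fact~\ref{Mu6} in place of your explicit construction of the pulled-back solution, but the content is identical; your direct argument is in fact exactly the proof of Proposition~\ref{Mu1} unwound, and your remark about non-translation-invariance makes explicit a point the paper leaves implicit.
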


\begin{proof}
Let $\LL'$ denote the equation $px+qy=rz$. Now observe by repeatedly applying Proposition~\ref{Mu1} and Fact~\ref{Mu6} that any $\LL$-free set is also an $\LL'$-free set. Hence $S$ must be $\LL'$-free and so we simply apply Lemma~\ref{MainL1}.
\end{proof}

This bounds the number of `small elements' in solution-free sets for equations with more than three variables, and in Theorem~\ref{MainT1Ext} we will use this lemma to obtain a result for the number of maximal solution-free sets.

\section{The size of the largest solution-free set}\label{2LSFS}

The aim of this section is to use our results from the previous section to obtain bounds on $\mu_{\LL}(n)$ for linear equations $\LL$ of the form $px+qy=rz$ with $p\geq q \geq r$ positive integers and also linear equations with more than three variables. As previously mentioned we can use Lemma~\ref{MainL1} to obtain a bound on the size of a solution-free set. 

\begin{cor}\label{Mu3}
Let $\LL$ denote the equation $px+qy=rz$ where $p\geq q \geq r$ and $p,q,r$ are fixed positive integers satisfying $\gcd(p,q,r)=1$. Let $S$ be an $\LL$-free subset of $[n]$ and suppose $M$ is the largest element of $S$ divisible by $t:=\gcd(p,q)$. Write $r_1:=p/t$ and $r_2:=q/t$. Then $$|S| \leq M - \left \lfloor \frac{rM}{r_2(p+q)} \right \rfloor -(r_1 r_2 -r_1-r_2+1) \left \lfloor \left \lfloor \frac{rM}{r_1(p+q)} - \frac{1}{r_2} \right \rfloor \frac{1}{r_1 r_2}\right \rfloor + \left \lceil \frac{(n-M)(t-1)}{t} \right \rceil.$$
\end{cor}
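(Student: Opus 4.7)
The plan is to partition $[n]$ into three consecutive intervals determined by $M$ and $\lceil rM/q \rceil$, bound $|S|$ in each piece, and sum. Specifically, I would write
\[
[n] = \bigl[1,\lceil rM/q \rceil - 1\bigr] \;\cup\; \bigl[\lceil rM/q \rceil, M\bigr] \;\cup\; (M,n],
\]
which is a valid partition because $r \leq q$ forces $\lceil rM/q \rceil \leq M$.

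For the first interval I would directly invoke Lemma~\ref{MainL1}, which (since $M \in S$ is divisible by $t$) says that $S$ contributes at most
\[
\left\lceil \frac{rM}{q} \right\rceil - 1 - \left\lfloor \frac{rM}{r_2(p+q)} \right\rfloor - (r_1 r_2 - r_1 - r_2 + 1)\left\lfloor \left\lfloor \frac{rM}{r_1(p+q)} - \frac{1}{r_2} \right\rfloor \frac{1}{r_1 r_2} \right\rfloor
\]
elements here. For the middle interval I would just use the trivial bound of $M - \lceil rM/q \rceil + 1$ elements. For the tail $(M,n]$ I would use the hypothesis that $M$ is the \emph{largest} element of $S$ divisible by $t$: every element of $S \cap (M,n]$ must be coprime-to-$t$ in the sense of not being a multiple of $t$, so the contribution is at most the number of non-multiples of $t$ in $(M,n]$. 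Using $t \mid M$, this count is
\[
(n-M) - \bigl(\lfloor n/t \rfloor - M/t\bigr) = (n-M) - \lfloor (n-M)/t \rfloor = \left\lceil \frac{(n-M)(t-1)}{t} \right\rceil,
\]
where the middle equality follows from $t \mid M$ by absorbing $M/t$ into the floor.

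Summing the three contributions, the $\lceil rM/q \rceil$ terms from the first two intervals cancel, leaving exactly the expression in the statement. There is no serious obstacle; the only thing to be careful about is the tail count, where one must exploit $t \mid M$ to convert $\lfloor n/t \rfloor - M/t$ into $\lfloor (n-M)/t \rfloor$ and then recognise this as the claimed ceiling. Everything else is just adding the three bounds.
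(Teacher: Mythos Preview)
Your proposal is correct and follows exactly the same approach as the paper: apply Lemma~\ref{MainL1} to bound $|S\cap[\lceil rM/q\rceil-1]|$, use the trivial bound on $[\lceil rM/q\rceil,M]$, and use the maximality of $M$ to bound $|S\cap(M,n]|$ by the number of non-multiples of $t$ there. The paper's proof is simply a terser version of yours, compressing the middle and tail estimates into the single remark that $S$ ``cannot contain any element larger than $M$ and divisible by $t$.''
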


\begin{proof}
By Lemma~\ref{MainL1}, $S$ contains at most $\ceil{rM/q}-1-\floor{rM/(r_2(p+q))}-(r_1 r_2 -r_1-r_2+1)\floor{\floor{rM/(r_1(p+q))-1/r_2}/(r_1 r_2)}$ elements from $[\ceil{rM/q}-1]$. It also cannot contain any element larger than $M$ and divisible by $t$.
\end{proof}

Note in the statement of Corollary~\ref{Mu3} we are implicitly assuming that $M$ exists. If it does not then $|S| \leq \ceil{n(t-1)/t}$.

%Recall that if $\LL$ denotes the equation $px+qy=rz$ where $p,q,r$ are fixed positive integers satisfying $\gcd(p,q,r)=1$ and $t:=\gcd(p,q)$, then the set $T:=\{x \in [n]: x \not \equiv 0 $ mod $ t\}$ is $\LL$-free. Let $a$ be the unique non-negative integer $0 \leq a < t$ such that $n-a$ is divisible by $t$. The interval $I:=[\floor{r(n-a)/(p+q)}+1,n]$ is also $\LL$-free. So $\mu_{\LL}(n) \geq \max \{ |T|, |I| \}$.
We are now ready to prove Theorem~\ref{Mu4}, which determines $\mu_{\LL}(n)$ for a wide class of equations of the form $px+qy=rz$ where $p\geq q \geq r$ and $p,q,r$ are fixed positive integers.

%In the following theorem we find equations where the set $T$ is a largest solution-free set; We also find equations where $I$ is a largest solution-free set.

\begin{thm-hand}[\ref{Mu4}.] %\bf{\ref{Mu4}}
{\it
Let $\LL$ denote the equation $px+qy=rz$ where $p\geq q \geq r$ and $p,q,r$ are fixed positive integers satisfying $\gcd(p,q,r)=1$. Let $t:=\gcd(p,q)$. Write $r_1:=p/t$ and $r_2:=q/t$.
\begin{enumerate}[(i)]
\item{If $q$ divides $p$ and $p+q \leq rq$ then $\mu_{\LL}(n)=\ceil{(q-1)n/q}$;}
\item{If $q$ divides $p$ and $p+q \geq rq$ then $\mu_{\LL}(n)=\ceil{(p+q-r)(n-a)/(p+q)}+a$ where $a$ is the unique non-negative integer $0 \leq a < q$ such that $n-a$ is divisible by $q$;}
\item{If $q$ does not divide $p$, $t>1$ and $$r>(r_1r_2-r_1-r_2+4)r_2\bigg(r_1+1+\frac{r_2-1}{r_1^2+(r_1-1)(r_2-1)}\bigg)$$ then $\mu_{\LL}(n)=\ceil{(t-1)n/t}$.}
\end{enumerate}}
\end{thm-hand}

\begin{proof}
Let $S$ be an $\LL$-free subset of $[n]$ and suppose $M$ is the largest element of $S$ divisible by $t$. If $S$ does not contain an element divisible by $t$, set $M:=0$. If $q$ divides $p$ then $t=q$ and $r_2=1$ and hence by Corollary~\ref{Mu3} we have 
\begin{align}\label{sizeeq}
|S| \leq \left \lceil \frac{(p+q-r)M}{p+q} \right \rceil + \left \lceil \frac{(n-M)(q-1)}{q} \right \rceil .
\end{align} 
(This is true even in the case $M=0$.) 

If $p+q \leq rq$ then $|S| \leq \ceil{(q-1)M/q}+\ceil{(n-M)(q-1)/q}=\ceil{n(q-1)/q}$ since $M$ is divisible by $q$. Observe that the set $T_n:=\{x \in [n]: x \not \equiv 0 $ mod $ t\}$ is an $\LL$-free set obtaining this size, and so this proves (i).

For (ii) we will show that (\ref{sizeeq}) is an increasing function of $M$ (when restricted to running through $M$ divisible by $t$) and hence it will be maximised by taking $M=n-a$. Then $|S| \leq \ceil{(p+q-r)(n-a)/(p+q)}+a$. Observe that the interval $I_n:=[\floor{r(n-a)/(p+q)}+1,n]$ is an $\LL$-free set obtaining this size and so this proves (ii), provided (\ref{sizeeq}) is an increasing function of $M$.

Since $M$ must be divisible by $t=q$, write $M':=M/q$ and so (\ref{sizeeq}) can be written as $$\left \lceil \frac{((r_1+1)q-r)M'}{r_1+1} \right \rceil + \left \lceil \frac{n(q-1)}{q} \right \rceil - M'(q-1)= M'+\left \lceil \frac{-rM'}{r_1+1} \right \rceil + \left \lceil \frac{n(q-1)}{q} \right \rceil.$$

Now observe that the difference between successive terms $M'$ and $M'+1$ is given by $$M'+1+\left \lceil \frac{-r(M'+1)}{r_1+1} \right \rceil - M'-\left \lceil \frac{-rM'}{r_1+1} \right \rceil = 1+ \left \lceil \frac{-rM'}{r_1+1} - \frac{r}{r_1+1} \right \rceil - \left \lceil \frac{-rM'}{r_1+1} \right \rceil \geq 0 $$ 
where the inequality follows since $r_1+1 \geq r$. Hence (\ref{sizeeq}) is an increasing function of $M$ as required.

For (iii) if $M=0$ then $|S| \leq \ceil{n(t-1)/t}$ as required. So assume $M \geq t$. Then by Corollary~\ref{Mu3} we have

\begin{align*}
|S| \leq & \, M - \left \lfloor \frac{rM}{r_2(p+q)} \right \rfloor -(r_1 r_2 -r_1-r_2+1) \left \lfloor \left \lfloor \frac{rM}{r_1(p+q)} - \frac{1}{r_2} \right \rfloor \frac{1}{r_1 r_2}\right \rfloor + \left \lceil \frac{(n-M)(t-1)}{t} \right \rceil \\
\leq & \, M - \frac{rM}{r_2(p+q)}+1-\frac{r_1 r_2 - r_1 -r_2+1}{r_1 r_2} \bigg( \frac{rM}{r_1(p+q)}-\frac{1}{r_2}-1 \bigg) +r_1 r_2-r_1-r_2+1 \\
& - \frac{M(t-1)}{t} + \left \lceil \frac{n(t-1)}{t} \right \rceil \\
\leq & \, \left \lceil \frac{n(t-1)}{t} \right \rceil + r_1 r_2-r_1-r_2+3 - M \bigg( \frac{r(r_1^2+(r_1-1)(r_2-1))}{tr_1^2 r_2(r_1+r_2)}-\frac{1}{t}\bigg) \\
= & \, \left \lceil \frac{n(t-1)}{t} \right \rceil + r_1 r_2-r_1-r_2+3 - M \bigg( \frac{r}{tr_2}\bigg(r_1+1+\frac{r_2-1}{r_1^2+(r_1-1)(r_2-1)}\bigg)^{-1} -\frac{1}{t}\bigg) \\
\leq & \, \left \lceil \frac{n(t-1)}{t} \right \rceil + r_1 r_2-r_1-r_2+3 - M \bigg( \frac{r_1 r_2-r_1-r_2+4}{t}-\frac{1}{t}\bigg) \\
\leq & \, \left \lceil \frac{n(t-1)}{t} \right \rceil,
\end{align*}
where the penultimate inequality follows by our lower bound on $r$ and the last inequality follows by using $M \geq t$.
\end{proof}

For Theorem~\ref{Mu4}(iii) it is easy to check that actually given the conditions on $r$ we must always have $t>1$ (we just state $t>1$ in the theorem for clarity). As an example, $p:=3t$, $q:=2t$, $r\geq 41$, and $t\geq r/2$ gives a set of equations which satisfy the conditions of Theorem~\ref{Mu4}(iii).

%We now consider linear equations with more than three variables.  
Theorem~\ref{Mu4} together with 
 Proposition~\ref{Mu1} yield results for $\mu_{\LL}(n)$ where $\LL$ is an equation with more than three variables. Full details can be found in~\cite{thesis}.

\begin{cor}\label{Mu5}
Let $\LL$ denote the equation $a_1 x_1 + \cdots + a_k x_k + b_1 y_1 + \cdots + b_{\ell} y_{\ell} = c_1 z_1+ \cdots + c_m z_m$ where the $a_i,b_i,c_i \in \mathbb{N}$ and $p':=\sum_i a_i$, $q':=\sum_i b_i$ and $r':=\sum_i c_i$ satisfy $p' \geq q' \geq r'$. Let $t':=\gcd(p',q',r')$ and write $p:=p'/t'$, $q:=q'/t'$ and $r:=r'/t'$. Let $t:=\gcd(p,q)$.
\begin{enumerate}[(i)]
\item{If $m=1$, $\ell=1$, $q'=b_1$ divides $a_i$ for all $1\leq i \leq k$ and $p+q \leq rq$ then $\mu_{\LL}(n)=\ceil{(q-1)n/q}$;}
\item{If $q$ divides $p$ and $p+q \geq rq$ then $\ceil{(p+q-r)n/(p+q)} \leq \mu_{\LL}(n) \leq \ceil{(p+q-r)(n-a)/(p+q)}+a$ where $a$ is the unique non-negative integer $0 \leq a < q$ such that $n-a$ is divisible by $q$;}
\item{Write $r_1:=p/t$ and $r_2:=q/t$. If $q$ does not divide $p$, $m=1$, $tt'$ divides $a_i$ and $b_j$ for $1 \leq i \leq k$, $1 \leq j \leq \ell$ and $$r>(r_1r_2-r_1-r_2+4)r_2\bigg(r_1+1+\frac{r_2-1}{r_1^2+(r_1-1)(r_2-1)}\bigg)$$ then $\mu_{\LL}(n)=\ceil{(t-1)n/t}$.}
\end{enumerate} \qed
\end{cor}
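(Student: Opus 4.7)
The plan is to decouple the upper and lower bounds. The upper bound will follow uniformly from a reduction to the three-variable case handled by Theorem~\ref{Mu4}, while the lower bound is obtained by exhibiting explicit $\LL$-free subsets of $[n]$ of the required size.

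For the upper bound, I would rewrite $\LL$ as $a_1 x_1 + \cdots + a_k x_k + b_1 y_1 + \cdots + b_\ell y_\ell - c_1 z_1 - \cdots - c_m z_m = 0$ and apply Proposition~\ref{Mu1} repeatedly to merge pairs of same-sign coefficients until only three variables remain, yielding $\LL': p' X + q' Y = r' Z$. This gives $\mu_\LL(n) \leq \mu_{\LL'}(n)$, and by Fact~\ref{Mu6} we have $\mu_{\LL'}(n) = \mu_{\LL''}(n)$ where $\LL'': pX + qY = rZ$. In case (i), $q' = b_1 \mid a_i$ forces $q' \mid p'$, and hence $q \mid p$; together with $p+q \leq rq$, Theorem~\ref{Mu4}(i) applies and yields the upper bound $\lceil (q-1)n/q \rceil$. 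Cases (ii) and (iii) are handled analogously, applying Theorem~\ref{Mu4}(ii) and (iii) respectively (the hypotheses on $p,q,r$ carry over verbatim to $\LL''$).

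For the lower bound in (i) and (iii), I would show that $T_n = \{x \in [n] : x \not\equiv 0 \pmod{t}\}$ is $\LL$-free; since $|T_n| = \lceil (t-1)n/t \rceil$ (and $t = q$ in case (i)), this matches the upper bound. Suppose for contradiction a solution of $\LL$ lies in $T_n$. In case (i), every LHS coefficient of $\LL$ is divisible by $q' = qt' = tt'$, since $q' \mid a_i$ by hypothesis and $b_1 = q'$; in case (iii), the hypothesis $tt' \mid a_i, b_j$ gives the same divisibility directly. Hence $tt' \mid c_1 z_1 = r t' z_1$ (using $m=1$), so $t \mid r z_1$. Since $\gcd(p,q,r)=1$ and $t = \gcd(p,q)$, $\gcd(t,r)=1$, and therefore $t \mid z_1$, contradicting $z_1 \in T_n$.

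For the lower bound in (ii), consider the interval $A_n := [\floor{rn/(p+q)} + 1, n]$, of size $\lceil (p+q-r)n/(p+q) \rceil$. A solution of $\LL$ in $A_n$ would satisfy
\[
(p'+q')\bigl(\floor{rn/(p+q)} + 1\bigr) \leq \sum_i a_i x_i + \sum_j b_j y_j = \sum_s c_s z_s \leq r' n,
\]
and dividing by $t'$ yields $(p+q)\floor{rn/(p+q)} \leq rn - (p+q)$, contradicting the elementary inequality $(p+q)\floor{rn/(p+q)} > rn - (p+q)$. The main obstacle is the lower bound in (i) and (iii): the divisibility hypotheses are tailored precisely to guarantee that $t$ divides \emph{every} coefficient on the LHS of $\LL$, which is what allows the modular argument to force $t \mid z_1$. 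Without these hypotheses $T_n$ can contain $\LL$-solutions, so they are essential to the argument rather than incidental.
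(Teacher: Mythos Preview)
Your proposal is correct and follows exactly the route the paper indicates (the paper only states that Corollary~\ref{Mu5} follows from Theorem~\ref{Mu4} together with Proposition~\ref{Mu1} and defers details to~\cite{thesis}). The upper bounds come from repeated use of Proposition~\ref{Mu1} and Fact~\ref{Mu6} to reduce to $\LL'':px+qy=rz$ and then invoking Theorem~\ref{Mu4}; the lower bounds come from verifying that $T_n$ (cases (i), (iii)) and the interval $[\lfloor rn/(p+q)\rfloor+1,n]$ (case (ii)) are $\LL$-free, precisely as you do, with the divisibility hypotheses $q'\mid a_i$ and $tt'\mid a_i,b_j$ being exactly what is needed to force $t\mid z_1$ via the modular argument.
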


\section{The number of maximal solution-free sets}\label{2MSFS}
%Let $\max(S)$ denote the largest element in a finite set $S \subseteq \mathbb{N}$. %Let $N(S,B)$ denote the number of possible extensions of a set $S$ in a set $B$.
We start this section with the proof of Theorem~\ref{MainT1}.

\begin{thm-hand}[\ref{MainT1}.] %\bf{\ref{MainT1}}\rm\em. %[\ref{MainT1}.]
{\it
Let $\LL$ denote the equation $px+qy=rz$ where $p \geq q \geq r$ and $p,q,r$ are fixed positive integers satisfying $\gcd(p,q,r)=1$. Let $t:=\gcd(p,q)$. Then $$f_{\max}(n,\LL) \leq 2^{\frac{Crn}{q}+o(n)} \text{ where } C:=1-\frac{t}{p+q}\Big(\frac{p^2+(p-t)(q-t)}{p^2} \Big).$$}
\end{thm-hand}

\begin{proof}
First note that $C$ lies between $1/2$ and $1-t/(p+q)$. To see this, note that if $q$ divides $p$, then $C=1-q/(p+q) \geq 1/2$ since $p \geq q$. Otherwise, $p>q>t$, and so $(p-t)(q-t)<p^2$. Hence $t(p^2+(p-t)(q-t))/(p^2(p+q)) < 2t/(p+q) \leq 2(q/2)/(p+q) < 1/2$ and so $C> 1/2$. We observe that $C \leq 1-t/(p+q)$ since $p \geq q \geq t$.

Let $\mathcal{F}$ denote the set of containers obtained by applying Lemma~\ref{L1Ext}. Since every maximal $\LL$-free subset of $[n]$ lies in at least one of the $2^{o(n)}$ containers, it suffices to show that every $F \in \mathcal F$ houses at most $2^{Crn/q+o(n)}$ maximal $\LL$-free sets. 

Let $F \in \mathcal F$. By Lemmas~\ref{L1Ext}(i) and~\ref{L2Ext}, $F=A \cup B$ where $|A|=o(n)$, $|B| \leq \mu_\LL(n)$ and $B$ is $\LL$-free. %Note that we can add all the elements of  $\mathcal{M}_\LL(n)$ to $B$ (and thus $F$) whilst ensuring that $|B| \leq \mu_\LL(n)$ and $B$ is $\LL$-free. So we may assume that $\mathcal{M}_\LL(n) \subseteq B$.
Define $M:=\max \{x \in B: x \equiv 0 $ mod $t\}$ and $u:=\max \{\floor{rM/q},\floor{rn/2q}\}$. Every maximal $\LL$-free set which lies in such a container can be constructed by:
\begin{enumerate}[(i)]
\item{Picking $S_1 \subseteq A$ to be $\LL$-free;}
\item{Adding a set $S_2 \subseteq [u] \cap B$ so that $S_1 \cup S_2$ is $\LL$-free;}
\item{Choosing a set $S_3 \subseteq [u+1,n] \cap B$ so that $S_1 \cup S_2 \cup S_3$ is a maximal $\LL$-free subset of $[n]$.}
\end{enumerate}

There are $2^{o(n)}$ ways to pick $S_1$. If $M \leq n/2$ then $u=\floor{rn/2q}$ and so there are at most $2^{rn/2q} \leq 2^{Crn/q}$ ways to pick $S_2$ so that $S_1 \cup S_2$ is $\LL$-free. Write $r_1:=p/t$ and $r_2:=q/t$. If $M \geq n/2$ then since $M$ is divisible by $t$, we apply Lemma~\ref{MainL1} to show that 
\begin{align*}
%|[u] \cap B|= & \Bigg| \Bigg[ \left \lfloor \frac{rM}{q} \right \rfloor \Bigg] \cap B \Bigg|
|[u] \cap B|= & \, |[\floor{rM/q}] \cap B| \\
\leq & \left \lfloor \frac{rM}{q} \right \rfloor - \left \lfloor \frac{rM}{r_2(p+q)} \right \rfloor -(r_1 r_2-r_1-r_2+1) \left \lfloor \left \lfloor \frac{rM}{r_1(p+q)} - \frac{1}{r_2} \right \rfloor \frac{1}{r_1 r_2} \right \rfloor \\
= & \, \frac{CrM}{q}+o(n).
\end{align*}
Hence there are at most $2^{CrM/q+o(n)} \leq 2^{Crn/q+o(n)}$ ways to pick $S_2$ so that $S_1 \cup S_2$ is $\LL$-free. 

Let $B':=[u+1,n]\cap B$. For step (iii) we calculate the number of extensions of $S_1 \cup S_2$ into $B'$. Observe by Lemma~\ref{L4Ext}, this is bounded above by MIS$(L_{S_1 \cup S_2}[B'])$. We will show that this link graph has only one maximal independent set. Then combining steps (i)-(iii) we have that $F$ contains at most $2^{o(n)} \times 2^{Crn/q+o(n)} = 2^{Crn/q+o(n)}$ maximal $\LL$-free sets as desired. 

If the link graph only contains loops and isolated vertices, then it has only one maximal independent set. For it to have an edge between distinct vertices, we either must have $x,z \in B'$, $y \in S_1 \cup S_2$ such that $px+qy=rz$ or $py+qx=rz$, or we must have $x,y \in B'$, $z \in S_1 \cup S_2$ such that $px+qy=rz$. 

The first of these events does not occur since otherwise $rz \geq q(x+y) > qx \geq q(\floor{rM/q}+1) >rM$ and so $z>M$. Note that since $z$ is part of the solution $px+qy=rz$ and $\gcd(p,q,r)=1$, it must be divisible by $t$. However this contradicts $z>M$ as we have $z \in B$, but $M$ was defined to be the largest element in $B$ divisible by $t$. 

If $M>n/2$ then the second event does not occur since $rz=px+qy \geq q(x+y) \geq 2q(\floor{rM/q}+1) > 2rM >rn$ and so $z>n$. If $M \leq n/2$ then the second event does not occur since $rz=px+qy \geq q(x+y) \geq 2q(\floor{rn/2q}+1) > rn$ and so again $z>n$.
\end{proof}

Note that when $r=1$, Theorem~\ref{MainT1} gives us new results for equations of the form $px+qy=z$. The authors previously obtained results for such equations in~\cite{HT1}. In the appendix we give a summary describing which result gives the best upper bound for various values of $p$ and $q$.

When $\LL$ denotes the equation $qx+qy=z$ for some positive integer $q\geq 2$, Proposition~26(iii)\COMMENT{AT:changes to paper 1... still Prop 26?} from~\cite{HT1} gives a lower bound of $f_{\max}(n,\LL) \geq 2^{(n-6q)/2q}$. Combining this with Theorem~\ref{MainT1} gives us the following asymptotically exact result.

\begin{thm-hand}[\ref{qxqyz}.] %\label{qxqyz}
{\it
Let $\LL$ denote the equation $qx+qy=z$ where $q\geq 2$ is an integer. Then $$f_{\max}(n,\LL)=2^{n/2q+o(n)}.$$}
\end{thm-hand}

By adapting the proof of Theorem~\ref{MainT1} we obtain the following result for $f_{\max}(n,\LL)$ for some equations with more than three variables. %Note that we require Green's theorems for equations with more than three variables here (Lemmas~\ref{L1Ext} and~\ref{L2Ext}) and also use of the link hypergraph.

\begin{thm}\label{MainT1Ext}
Let $\LL$ denote the equation $p_1 x_1 + \cdots + p_k x_k = rz$ where $p_1, \dots, p_k,r \in \mathbb{N}$ satisfy $\gcd(p_1,\dots,p_k,r)=1$ and $p_1 \geq \cdots \geq p_k \geq r$. Suppose that $p:=\sum_{i=1}^{k-1} p_i$ and $q:=p_k$ satisfy $t:=\gcd(p,q)=\gcd(p_1,\dots,p_k)$. Then $$f_{\max}(n,\LL) \leq 2^{\frac{Crn}{q} +o(n)} \text{ where } C:=1-\frac{t}{p+q}\Big(\frac{p^2+(p-t)(q-t)}{p^2} \Big).$$
\end{thm}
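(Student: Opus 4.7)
The plan is to follow the proof of Theorem~\ref{MainT1} essentially verbatim, reinterpreting the three-variable equation $px+qy=rz$ as the ``collapsed'' version of our $(k+1)$-term equation obtained by grouping $\{p_1, \dots, p_{k-1}\}$ (total $p$) against $\{p_k\}$ (total $q$). I would first apply Lemma~\ref{L1Ext} to obtain $2^{o(n)}$ containers, each of which, via Lemma~\ref{L2Ext}, can be written as $F = A \cup B$ with $|A| = o(n)$ and $B$ being $\LL$-free. It then suffices to show that every container houses at most $2^{Crn/q + o(n)}$ maximal $\LL$-free subsets of $[n]$.

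Fixing such a container, I would set $M := \max\{x \in B : t \mid x\}$ and $u := \max\{\lfloor rM/q \rfloor, \lfloor rn/(2q) \rfloor\}$, and decompose every maximal $\LL$-free set in $F$ as $S_1 \cup S_2 \cup S_3$ with $S_1 \subseteq A$, $S_2 \subseteq [u] \cap B$, and $S_3 \subseteq [u+1,n] \cap B$. There are $2^{o(n)}$ choices for $S_1$. For $S_2$, when $M > n/2$ I apply Lemma~\ref{MainL1Ext} with the partition $P_1 := \{p_1, \dots, p_{k-1}\}$, $P_2 := \{p_k\}$, $P_3 := \{-r\}$, whose sums are exactly $p, q, r$ and for which $\gcd(p,q,r) = 1$ is inherited from the hypotheses $\gcd(p_1, \dots, p_k, r) = 1$ together with $t = \gcd(p_1, \dots, p_k) = \gcd(p,q)$; the resulting formula simplifies exactly as in the proof of Theorem~\ref{MainT1} to $|[u] \cap B| \leq CrM/q + o(n) \leq Crn/q + o(n)$. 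When $M \leq n/2$ the trivial bound $u \leq rn/(2q) \leq Crn/q$ suffices since $C \geq 1/2$. Either way $S_2$ contributes at most $2^{Crn/q + o(n)}$ choices.

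The main obstacle is controlling $S_3$: I need the link hypergraph $L_{S_1 \cup S_2}[B']$ on $B' := [u+1, n] \cap B$ to have a unique maximal independent set, so that Lemma~\ref{L4Ext} forces $S_3$ to be uniquely determined. This reduces to showing $L_{S_1 \cup S_2}[B']$ has no hyperedge on $s \geq 2$ distinct vertices of $B'$. Any such hyperedge arises from a solution $p_1 x_1 + \cdots + p_k x_k = rz$ with at least two distinct values from $B'$ appearing among the variables. If $z$ is one such value and some $x_i$ is another, then using $p_j \geq p_k = q$ for every $j$ we get $rz \geq q x_i \geq q(u+1) > rM$, so $z > M$; combined with $t \mid rz$ (from $t = \gcd(p_1, \dots, p_k)$) and $\gcd(t, r) = 1$ forcing $t \mid z$, this contradicts the definition of $M$. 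If instead $z \notin B'$, then two distinct $x_{i_1}, x_{i_2} \in B'$ appear, and $rz \geq q(x_{i_1} + x_{i_2}) \geq 2q(u+1)$ exceeds $rn$ in both regimes $M > n/2$ and $M \leq n/2$, giving $z > n$, a contradiction. Combining everything yields the stated $2^{Crn/q + o(n)}$ bound per container, and multiplying by $2^{o(n)}$ finishes the proof; the only substantive novelty over Theorem~\ref{MainT1} is the hypergraph case analysis, but the uniform bound $p_j \geq q$ makes it essentially identical to the three-variable version.
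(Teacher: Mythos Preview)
Your proposal is correct and follows essentially the same route as the paper's proof: the container/removal setup, the definition of $M$ and $u$, the use of Lemma~\ref{MainL1Ext} with the partition $(\{p_1,\dots,p_{k-1}\},\{p_k\},\{-r\})$ to bound $|[u]\cap B|$, and the two-case analysis of the link hypergraph on $B'$ (edge containing $z$ versus edge on two $x_i$'s) all mirror the paper exactly. The paper additionally spells out that $B'$ is $\LL$-free before invoking Lemma~\ref{L4Ext}, but this is immediate from $B'\subseteq B$, so your omission is harmless.
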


\begin{proof}
We follow the proof used in Theorem~\ref{MainT1} precisely (except for using Lemma~\ref{MainL1Ext} instead of Lemma~\ref{MainL1} in step (ii)) up until counting the number of ways of extending $S_1 \cup S_2$ to a maximal $\LL$-free set in $B':=[u+1,n]\cap B$. Observe by Lemma~\ref{L4Ext}, this is bounded above by MIS$(L_{S_1 \cup S_2}[B'])$ since $B'$ and $S_1 \cup S_2$ are $\LL$-free. To see that $B'$ is $\LL$-free, suppose $(x_1,\dots,x_k,z)$ is a solution within $B'$ and note that $rz=p_1 x_1 +\cdots + p_k x_k > p_k x_k \geq q(rM/q)=rM$ and so $z>M$. (Here we needed that each $p_i$ is positive.) Since $\gcd(p_1,\dots,p_k,r)=1$ and $\gcd(p,q)=\gcd(p_1,\dots,p_k)$ we have $\gcd(t,r)=1$ and so in any solution to $\LL$, $z$ must be divisible by $t$. However this contradicts $z>M$ as we have $z \in B$, but $M$ was defined to be the largest element in $B$ divisible by $t$.%However $z>M$ but $M$ was chosen to be the largest $x \in B'$ which is divisible by $t$. Hence $z$ is not divisible by $t$, a contradiction.

We will show that this link hypergraph $L_{S_1 \cup S_2}[B']$ has only one maximal independent set (and hence the number of maximal $\LL$-free sets contained in $F$ is at most $2^{Crn/q+o(n)}$ as required). If the link hypergraph only contains loops and isolated vertices, then it has only one maximal independent set. 

For it to have a hyperedge between at least two vertices, there must exist a solution $(x_1,\dots,x_k,z)$ where either there is a hyperedge with distinct vertices $x_i,z \in B'$ for some $1 \leq i \leq k$ and $\{x_1,\dots,x_{i-1},x_{i+1},\dots,x_k\} \subseteq B' \cup S_1 \cup S_2$, or there is a hyperedge with distinct vertices $x_i,x_j \in B'$ for some $1\leq i<j \leq k$ and $\{x_1,\dots,x_{i-1},x_{i+1},\dots,x_{j-1},x_{j+1},\dots,x_k,z\} \subseteq B' \cup S_1 \cup S_2$.

Suppose the first event occurs with $(x_1,\dots,x_k,z)$. Then $rz=p_1 x_1+ \cdots+p_k x_k > p_i x_i \geq p_k x_i =q x_i \geq q(rM/q) =rM$ and so $z>M$. But since $z$ is part of a solution, it must be divisible by $t$. This contradicts $z \in B$, since $M$ was defined to be the largest element in $B$ divisible by $t$. %Since $z \in B'$, $z$ is hence not divisible by $t$, a contradiction to the first event occuring since $z$ is part of a solution and so must be divisible by $t$.

If $M>n/2$ then the second event does not occur since $rz=p_1 x_1+ \cdots+p_k x_k > p_k (x_i+x_j) \geq 2q (\floor{rM/q}+1) >2rM >rn$ and so $z>n$. If $M \leq n/2$ then the second event does not occur since $rz=p_1 x_1+ \cdots+p_k x_k > p_k(x_i+x_j) \geq 2q (\floor{rn/(2q)}+1)>rn$ and so again $z>n$.
\end{proof}

We end the section with a lower bound.
\begin{prop}
Let $\LL$ denote the equation $qx+qy=rz$ where $q>r$ and $q,r$ are fixed positive integers satisfying $\gcd(q,r)=1$. Then $$f_{\max}(n,\LL) \geq 2^{\ceil{\floor{rn/2q-rq/2}(q-1)/q}-1}.$$
\end{prop}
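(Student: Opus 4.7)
The plan is to apply Lemma~\ref{L6Ext} to the link graph of a single well-chosen multiple of $q$: I will show that for $M := q^2 \lfloor n/q^2\rfloor$ (assuming $n \geq q^2$; otherwise $|A|-1<0$ and the bound is trivial, where I write $|A| := \lceil\lfloor rn/(2q)-rq/2\rfloor (q-1)/q\rceil$), the induced subgraph of $L_{\{M\}}[B]$ on $B := \{x \in [1, rM/q - 1] : q \nmid x\}$ is essentially a matching of size at least $|A|-1$. First, both $\{M\}$ and $B$ are $\LL$-free: $\{M\}$ trivially, and $B \subseteq T_n := \{x \in [n] : q \nmid x\}$ since any solution $qx+qy=rz$ forces $q\mid z$ (using $\gcd(q,r)=1$), but $B$ has no multiples of $q$. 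These sets are disjoint because $M$ is a multiple of $q$.

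Next I analyse $L_{\{M\}}[B]$. An $\LL$-triple $\{x,y,M\}$ with distinct $x,y \in B$ must come from $qx+qy=rM$, i.e.\ $x+y=rM/q$ (the other orderings are precluded by $r\leq q$), and a loop at $x$ arises only from $2qx=rM$. Thus $L_{\{M\}}[B]$ is a matching pairing $x$ with $rM/q - x$, together with at most one loop at $rM/(2q)$. Because $q^2\mid M$, we have $q\mid rM/q = rqj$ where $j := \lfloor n/q^2\rfloor$, so $q\nmid x$ iff $q \nmid rM/q - x$; in particular each $x \in B$ has its partner in $B$, so $L_{\{M\}}[B]$ has no isolated vertex. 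A direct count gives $|B| = rj(q-1)$, and hence the number of non-loop edges is $m := \lfloor rj(q-1)/2\rfloor$. Since each maximal independent set must pick exactly one vertex per non-loop edge and must exclude the possible loop vertex, $\text{MIS}(L_{\{M\}}[B]) = 2^m$.

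It remains to verify $m \geq |A|-1$. Writing $n = q^2 j + u$ with $0\leq u<q^2$, one computes $N := \lfloor rn/(2q)-rq/2\rfloor = \lfloor rq(j-1)/2 + ru/(2q)\rfloor$; then a parity case analysis on whether $q$ is odd or even and whether $rj$ is odd or even --- with the subcase $q$ even, $rj$ odd accounting for the only half-integer loss due to the loop --- shows $\lfloor rj(q-1)/2\rfloor \geq \lceil N(q-1)/q\rceil - 1$. The extra gap $r(q-1)/2$ between $rj(q-1)/2$ and $r(j-1)(q-1)/2$ always suffices to absorb the rounding losses coming from the nested floor and ceiling functions. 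Hence by Lemma~\ref{L6Ext} applied to $L_{\{M\}}[B]$ we conclude $f_{\max}(n,\LL) \geq 2^m \geq 2^{|A|-1}$. The main obstacle is this final bookkeeping: although each sub-case is routine, handling the interplay between the inner floor inside $N$, the outer ceiling, and the floor $\lfloor rj(q-1)/2\rfloor$ requires some care.
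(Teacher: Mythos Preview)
Your approach is essentially the same as the paper's: both take $M = q^2\lfloor n/q^2\rfloor$, set $S=\{M\}$, and observe that on the non-multiples of $q$ below $rM/q$ the link graph is a perfect matching (up to one possible loop), then invoke Lemma~\ref{L6Ext}. The only real difference is bookkeeping at the end. The paper counts the disjoint edges as $\lceil \lfloor rM/(2q)\rfloor(q-1)/q\rceil$, subtracts one for a possible loop, and then simply uses $M>n-q^2$ together with monotonicity of $\lfloor\cdot\rfloor$ and $\lceil\cdot\rceil$ to reach the stated bound without any case split. You instead count the non-loop edges directly as $m=\lfloor rj(q-1)/2\rfloor$ and then run a parity case analysis to compare with the target; this works (indeed one checks $m\geq |A|$, not merely $m\geq |A|-1$), but the paper's formulation sidesteps that analysis entirely.
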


\begin{proof}
Let $B$ be the $\LL$-free set $\{z \in [n]: z \not \equiv 0 $ mod $ q\}$. Let $M:=\max\{z \in [n]: rz/q^2 \in [n]\}$; so $M > n-q^2$. Let $S:=\{M\}$ and consider the link graph $L_S[B]$. Note that if $i \in B$ where $i<rM/q$ then $rM/q-i \in B$. This follows since $rM/q^2 \in \mathbb{N}$ and so $rM/q-i \not \equiv 0 $ mod $ q$. Hence there is an edge in $L_S[B]$ between every such $i$ and $rM/q-i$ since $q(i+rM/q-i)=rM$. By running through all $i \in B$ we obtain a total of $\ceil{\floor{rM/2q}(q-1)/q}$ disjoint edges in $L_{S}[B]$ of which at most one is a loop (at $rM/2q$ if it is an integer not congruent to 0 modulo $q$). Hence we obtain an \emph{induced} matching $E$ in $L_S[B]$ of size $\ceil{\floor{rM/2q}(q-1)/q}-1 \geq \ceil{\floor{rn/2q-rq/2}(q-1)/q}-1$. %It is easy to see that this gives rise to $2^{|E|}$ maximal independent sets in $L_S[B]$. By applying Lemma~\ref{L6Ext} we obtain the result.
It is easy to see that the matching $E$ contains $2^{|E|}$ maximal independent sets. Since $E$ is an induced subgraph of $L_S[B]$, by applying Lemma~\ref{L6Ext} we obtain the result.
\end{proof}

%Let $\LL$ denote the equation $qx+qy=rz$ where $q>r$ and $q,r$ are fixed positive integers satisfying $\gcd(q,r)=1$. If $q \geq 19$ then the best upper bound on $f_{\max}(n,\LL)$ is $2^{rn/(2q)+o(n)}$ whereas if $3 \leq q \leq 18$ it is $3^{rn(q-1)/(3q^2)+o(n)}$. Both are not far off from the above lower bound. 
\begin{ques}\label{q1}
Let $\LL$ denote the equation $qx+qy=rz$ where $q>r \geq 2$ and $q,r$ are fixed positive integers satisfying $\gcd(q,r)=1$. Does $f_{\max}(n,\LL)=2^{rn(q-1)/(2q^2)+o(n)}$?
\end{ques}

In the appendix just before Proposition~\ref{best2} we describe the value of $\mu^* _{\LL}(n)$. We remark that, using this, it turns out that for equations $\LL$ as in Question~\ref{q1}, we have that $2^{rn(q-1)/(2q^2)+o(n)}
=2^{(\mu_{\LL}(n)-\mu^*_{\LL}(n))/2+o(n)}$.

\section{Concluding Remarks}
In this paper we have used Lemma~\ref{GM1} as a tool to prove both results on the size of the largest solution-free subset of $[n]$ as well as on the number of maximal solution-free subsets of $[n]$.
Recall that Green~\cite{G-R} showed that given a homogeneous linear equation $\LL$, then $f(n, \LL)=2^{\mu_{\LL} (n)+o(n)}$. One can actually very easily apply Lemma~\ref{GM1} to obtain
that $f(n, \LL)=\Theta(2^{\mu_{\LL} (n)})$ for \emph{some} linear equations $\LL$ of the form $px+qy=rz$ where $p\geq q\geq r$ are positive integers. However, the results we obtain seem quite niche. Thus, we defer 
their statement and proof to the thesis of the first author~\cite{thesis}.

The crucial trick used in the proof of Theorems~\ref{MainT1} and~\ref{MainT1Ext} was to choose our sets $S$ carefully so that the link hypergraphs $L_S[B]$  each contain precisely one maximal independent set.
In other applications of this method~\cite{BLST, BLST2, HT1} the approach had been instead to obtain other structural properties of the link graphs (such as being triangle-free) to ensure there are not too
many maximal independent sets in $L_S[B]$. It would be interesting to see if the approach of our paper can be applied to obtain other (exact) results in the area.
%Note that we can apply Lemma~\ref{GM1} and Proposition~\ref{Mu1} to obtain upper bounds on the number of solution-free sets for various equations. By the use of each of our results for the size of the largest solution-free subset of $[n]$ for various three-variable equations and by applying Proposition~\ref{Mu1}, we can obtain results on the size of the largest solution-free set for equations with more than three variables. This work is deferred to the thesis of the first author.

%Our asymptotically exact result given by Theorem~\ref{qxqyz} implies that the number of maximal $\LL$-free subsets of $[n]$ exhibit very different behaviour depending on the linear equation $\LL$. It would be fascinating to obtain further exact results for various linear equations.

Although we have found an initial bound on $f_{\max}(n,\LL)$ for some equations with more than three variables, we still do not know in general if there are significantly fewer maximal $\LL$-free subsets of $[n]$ than there are $\LL$-free subsets of $[n]$. 
Progress on giving general upper bounds on the number of maximal independent sets in (non-uniform) hypergraphs should (through the method of link hypergraphs) yield results in this direction.

\section*{Acknowledgments}
The authors are grateful to the referee for a careful review which particularly aided the quality of the exposition in the introduction.

{\footnotesize \obeylines \parindent=0pt

Robert Hancock, Andrew Treglown
School of Mathematics
University of Birmingham
Edgbaston
Birmingham
B15 2TT
UK
}
\begin{flushleft}
{\emph{E-mail addresses}:
\tt{\{rah410,a.c.treglown\}@bham.ac.uk}}
\end{flushleft}

\begin{appendix}
\section{}
In this appendix we give a summary of the best known upper bound on $f_{\max}(n,\LL)$ for equations of the form $px+qy=rz$ where $p \geq q \geq r$. First we recall some of the results from~\cite{HT1}.

\begin{thm}\label{max1}\cite{HT1}
Let $\LL$ be a fixed homogeneous three-variable linear equation. Then $$f_{\max}(n,\LL) \leq 3^{(\mu_{\LL}(n)-\mu_{\LL}^*(n))/3+o(n)}.$$
\end{thm}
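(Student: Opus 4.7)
The plan is to combine Green's container lemma (Lemma~\ref{L1Ext}) with the link graph reduction (Lemma~\ref{L4Ext}) and the classical Moon--Moser bound of $3^{N/3}$ on the number of maximal independent sets in any $N$-vertex graph. The governing observation is that the $\mu^*_\LL(n)$ elements of $[n]$ which lie in no non-trivial solution must belong to every maximal $\LL$-free subset and are automatically isolated in every relevant link graph, so they can be stripped off without cost before applying Moon--Moser.

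First I would apply Lemma~\ref{L1Ext} to obtain a family $\mathcal F$ of $2^{o(n)}$ containers, each of size at most $\mu_\LL(n)+o(n)$, with every $\LL$-free subset lying in some $F \in \mathcal F$. Write $I^* \subseteq [n]$ for the set of isolated elements (so $|I^*| = \mu^*_\LL(n)$). Since every maximal $\LL$-free subset of $[n]$ contains $I^*$, any container $F$ that houses one must itself satisfy $I^* \subseteq F$, and we restrict attention to such containers. For each such $F$, Lemma~\ref{L1Ext}(i) combined with Lemma~\ref{L2Ext} yields a decomposition $F = A \cup B$ with $|A| = o(n)$ and $B$ itself $\LL$-free. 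Any maximal $\LL$-free subset of $[n]$ inside $F$ is then constructed by choosing an $\LL$-free $S \subseteq A$ (only $2^{o(n)}$ options) and extending into $B$; by Lemma~\ref{L4Ext}, the number of extensions is at most $\mathrm{MIS}(L_S[B])$.

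To bound $\mathrm{MIS}(L_S[B])$, note that every $x \in B \cap I^*$ is automatically a graph-theoretically isolated vertex of $L_S[B]$: since $x$ lies in no non-trivial solution whatsoever, neither of the two edge-creating conditions (a shared $\LL$-triple with another vertex of $B$ via some $z \in S$) nor either of the two loop-creating conditions (of the form $\{x,x,z\}$ or $\{x,z,z'\}$) can be met. Isolated vertices belong to every maximal independent set and vertices carrying loops belong to none, so deleting both types does not affect $\mathrm{MIS}(L_S[B])$. Applying Moon--Moser to the resulting loopless graph on at most $|B| - |B \cap I^*| \leq \mu_\LL(n) - \mu^*_\LL(n) + o(n)$ vertices (using $I^* \subseteq A \cup B$ and $|A| = o(n)$) gives
$$\mathrm{MIS}(L_S[B]) \leq 3^{(\mu_\LL(n) - \mu^*_\LL(n))/3 + o(n)}.$$
Multiplying by the $2^{o(n)}$ containers and the $2^{o(n)}$ choices of $S$ yields the claimed bound on $f_{\max}(n,\LL)$.

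The main obstacle I anticipate is the verification that elements of $I^*$ really do become graph-theoretically isolated for \emph{every} choice of $S$, which requires a careful case analysis of all adjacency types in the link graph against the definition of $I^*$; this is essentially unpacking of definitions but must be done cleanly since the trivial/non-trivial distinction matters when $\LL$ happens to be translation-invariant. A secondary point is that Moon--Moser applies only to simple loopless graphs, but vertices carrying loops cannot appear in any independent set and so are discarded harmlessly before invoking the bound; the remaining arithmetic is routine bookkeeping of the $o(n)$ error terms arising from the container and removal lemmas.
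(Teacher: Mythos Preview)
The paper does not actually prove Theorem~\ref{max1}; it is quoted from \cite{HT1} and stated in the appendix only for reference, so there is no ``paper's own proof'' to compare against here. That said, your proposal is correct and matches exactly the container--link-graph--Moon--Moser template that the present paper lays out in Section~\ref{2tools} (and which \cite{HT1} evidently uses): decompose each container as $A\cup B$ via Lemmas~\ref{L1Ext} and~\ref{L2Ext}, bound extensions by $\mathrm{MIS}(L_S[B])$ via Lemma~\ref{L4Ext}, strip off the $\mu^*_\LL(n)$ elements that are forced into every maximal $\LL$-free set and are isolated in every link graph, and apply the $3^{N/3}$ bound to what remains.

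Two minor remarks. First, Lemma~\ref{L4Ext} as stated in this paper is only for non-translation-invariant $\LL$, so strictly speaking your argument runs only in that case; but when $\LL$ is translation-invariant one has $\mu_\LL(n)=o(n)$ and hence $f_{\max}(n,\LL)\le f(n,\LL)=2^{o(n)}$ directly from Green's theorem, so the claimed bound is trivial there. Second, your bookkeeping $|B\cap I^*|\ge \mu^*_\LL(n)-o(n)$ (from $I^*\subseteq F=A\cup B$ and $|A|=o(n)$) together with $|B|\le \mu_\LL(n)$ (since $B$ is $\LL$-free) is exactly what is needed; there is no gap.
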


\begin{thm}\label{max2}\cite{HT1}
Let $\LL$ denote the equation $px+qy=z$ where $p \geq q \geq 2$ are integers so that $p \leq q^2-q$ and $\gcd (p,q)=q$.  Then $$f_{\max}(n,\LL) \leq 2^{(\mu_{\LL}(n)-\mu_{\LL}^*(n))/2+o(n)}.$$
\end{thm}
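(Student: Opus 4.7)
The plan is to adapt the container-based framework of Theorem~\ref{MainT1} to exploit the stronger hypotheses $q\mid p$ and $p\le q^2-q$, so as to improve the generic Moon--Moser-style bound $3^{(\mu_\LL(n)-\mu_\LL^*(n))/3+o(n)}$ of Theorem~\ref{max1} to the tighter $2^{(\mu_\LL(n)-\mu_\LL^*(n))/2+o(n)}$. The improvement from base $3$ to base $2$ in the exponent strongly suggests aiming at a triangle-free structure on the relevant link graphs, so that the Hujter--Tuza bound (every triangle-free graph on $N$ vertices has $O(2^{N/2})$ maximal independent sets) replaces Moon--Moser.

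First I would apply Lemma~\ref{L1Ext} together with Lemma~\ref{L2Ext} to obtain a family of $2^{o(n)}$ containers $F=A\cup B$ with $|A|=o(n)$, $B$ an $\LL$-free set, and $|B|\le \mu_\LL(n)+o(n)$. By Lemma~\ref{L4Ext}, every maximal $\LL$-free subset of $[n]$ in $F$ has the form $S_1\cup I$ for some $\LL$-free $S_1\subseteq A$ and some maximal independent set $I$ in the link graph $L_{S_1}[B]$. Since there are $2^{o(n)}$ choices for $S_1$, the task reduces to showing $\textup{MIS}(L_{S_1}[B])\le 2^{(\mu_\LL(n)-\mu_\LL^*(n))/2+o(n)}$ uniformly in $S_1$. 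A first simplification removes the inactive vertices of $B$: elements of $[n]$ lying in no non-trivial $\LL$-solution are isolated in $L_{S_1}[B]$, and since $B$ is itself $\LL$-free, its ``active'' part has size at most $\mu_\LL(n)-\mu_\LL^*(n)$, so we may restrict attention to the subgraph induced on the active vertices.

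The main structural step is to show this induced subgraph is triangle-free. The hypothesis $p\le q^2-q$ ensures via Theorem~\ref{Mu4}(ii) that $I_n$ (rather than $T_n$) is the extremal $\LL$-free set, so a near-extremal $\LL$-free $B$ must concentrate on the interval $I_n=[\lfloor n/(p+q)\rfloor+1,n]$ up to an $o(n)$-sized residual. Using this concentration together with the divisibility $q\mid p$ (which forces the right-hand side of any solution $px+qy=z$ to be divisible by $q$), one verifies that for any triangle $u_1,u_2,u_3$ in the active part of $L_{S_1}[B]$ with witnesses $z_{12},z_{13},z_{23}$, a case analysis on the six possible assignments of $\{u_i,u_j,z_{ij}\}$ to the three variable roles of $px+qy=z$ leads to a contradiction. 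The key numerical input is that whenever $u_i,u_j\gtrsim n/(p+q)$ the quantity $pu_i+qu_j$ already exceeds $n$, so a witness cannot play the right-hand side role; this forces the witness into a left-hand side role, and the modular obstructions from $q\mid p$ then rule out the remaining configurations. Once triangle-freeness of the active subgraph is established, Hujter--Tuza delivers the desired MIS bound, and combining with the $2^{o(n)}$ factors from the container and $S_1$ choices completes the proof.

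The hard part will be executing this case analysis cleanly, particularly handling the $o(n)$-sized residual of $B$ lying outside $I_n$: such residual vertices may be arbitrarily small and could in principle complete triangles with vertices of $I_n$, so one must argue that the total number of active vertices involved in such ``residual triangles'' is itself $o(n)$ and can be absorbed into the $o(n)$ error in the exponent of the final bound. A secondary technical point is to make the argument uniform over all $\LL$-free $S_1\subseteq A$: this is handled by observing that any triangle witness $z_{ij}\in S_1$ is constrained (e.g.\ $z_{ij}\equiv 0\pmod q$ when it plays the right-hand side role, or $z_{ij}\ge n/(p+q)-o(n)$ in the left-hand side roles), so the choice of $S_1$ affects only a lower-order term.
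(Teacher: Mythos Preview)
This theorem is not proved in the present paper at all: it appears only in the appendix, quoted from~\cite{HT1} (alongside Theorems~\ref{max1} and~\ref{max3}) purely so that Proposition~\ref{best2} can compare the new bound of Theorem~\ref{MainT1} against earlier results. There is therefore no proof here against which to check your proposal.

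That said, your overall architecture---containers via Lemmas~\ref{L1Ext} and~\ref{L2Ext}, reduction to $\textup{MIS}(L_{S_1}[B])$ via Lemma~\ref{L4Ext}, and then a triangle-freeness argument feeding into the Hujter--Tuza bound---is exactly the strategy the concluding remarks of this paper attribute to~\cite{HT1} (``the approach had been instead to obtain other structural properties of the link graphs (such as being triangle-free)''). So the high-level plan is on target.

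Where your sketch goes wrong is the concentration step. You assert that a ``near-extremal $\LL$-free $B$ must concentrate on the interval $I_n$'' and then use $u_i,u_j\gtrsim n/(p+q)$ in the triangle case analysis. But nothing in the container lemma forces $B$ to be near-extremal: Lemma~\ref{L1Ext} only gives $|F|\le\mu_\LL(n)+o(n)$, with no lower bound, so $B$ could be any $\LL$-free set whatsoever---small, or large but far from $I_n$. (Even if one had a stability theorem saying near-extremal $\LL$-free sets resemble $I_n$, it would not apply here.) A valid triangle-freeness argument must therefore work for \emph{every} $\LL$-free $B$ and every $\LL$-free $S_1$, using only the arithmetic hypotheses $q\mid p$ and $p\le q^2-q$ directly. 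Your size-based reasoning ``$pu_i+qu_j>n$'' is unavailable in that generality; the contradiction has to come instead from combining the three witness equations and exploiting that $B$ and $S_1$ are themselves $\LL$-free, together with the divisibility and the bound $p/q\le q-1$. As written, the proposal does not supply this, and the ``residual'' patch you describe cannot repair it since the residual could be all of $B$.
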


\begin{thm}\label{max3}\cite{HT1}
Let $\LL$ denote the equation $px+qy=rz$ where $p\geq q \geq r$ and $p,q,r \in \mathbb{N}$. Then $$f_{\max}(n,\LL) \leq 2^{\mu_{\LL}(rn/q)+o(n)}.$$ 
\end{thm}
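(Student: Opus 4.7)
The plan is to exploit the following simple structural observation: for $\LL: px+qy=rz$ with $p \geq q \geq r$, every solution $(x,y,z) \in [n]^3$ satisfies $x, y \leq rn/q$, since $qx \leq px \leq px+qy = rz \leq rn$ and symmetrically for $y$. Hence in any $\LL$-solution in $[n]$ only the variable $z$ may exceed $rn/q$; the goal is to convert this into a reduction from $[n]$ to $[\lfloor rn/q \rfloor]$, matching the shape of the desired bound.

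Given a maximal $\LL$-free set $A \subseteq [n]$, let $m := \lfloor rn/q \rfloor$ and decompose $A = A_1 \cup A_2$ with $A_1 := A \cap [m]$ and $A_2 := A \cap [m+1, n]$. Note $A_1$ is an $\LL$-free subset of $[m]$. I would then argue that $A_2$ is completely determined by $A_1$, namely
$$A_2 \;=\; \{w \in [m+1, n] : \text{no } x, y \in A_1 \text{ satisfy } px + qy = rw\}.$$
The inclusion $\subseteq$ follows from $\LL$-freeness of $A$ together with the opening observation, which forces any $\LL$-triple meeting $A_2$ to use a single element of $A_2$ in the role of $z$ and two elements of $[m]$ (hence of $A_1$) in the roles of $x, y$. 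The reverse inclusion uses maximality: for any $w \in [m+1, n] \setminus A$, some $\LL$-triple lies in $A \cup \{w\}$ and must contain $w$; by the observation $w$ cannot play the role of $x$ or $y$ (as then $pw > rn \geq rz$), so $w$ plays the role of $z$, and the other two coordinates lie in $A \cap [m] = A_1$.

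To conclude, the number of possible $A_1$ is bounded by the total number of $\LL$-free subsets of $[m]$, which by Green's theorem~\cite{G-R} (applied with $n$ replaced by $m$) equals $2^{\mu_\LL(m) + o(m)} = 2^{\mu_\LL(rn/q) + o(n)}$. Since $A_1$ determines $A$, this yields $f_{\max}(n, \LL) \leq 2^{\mu_\LL(rn/q) + o(n)}$.

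The only step with any subtlety is the role analysis for maximality -- one must rule out that $w$ plays the role of $x$ or $y$ in the triple it creates, and that two distinct elements of $[m+1, n]$ could coexist in a triple. Both are handled immediately by the inequality $qx \leq px + qy = rz \leq rn$ that opens the argument, so there is no real obstacle and the proof is short; the substantive input is just Green's counting bound for $\LL$-free sets.
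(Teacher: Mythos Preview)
Your argument is correct. The key structural observation that every solution $(x,y,z)\in[n]^3$ to $px+qy=rz$ with $p\geq q\geq r$ has $x,y\leq rn/q$ is sound, and the deduction that $A\cap[m+1,n]$ is determined by $A\cap[m]$ (via maximality) goes through exactly as you describe; the role analysis is airtight once one checks $q(m+1)>rn$. Invoking Green's count $f(m,\LL)=2^{\mu_\LL(m)+o(m)}$ then finishes it.

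Note, however, that this theorem is not proved in the present paper: it is quoted from~\cite{HT1} in the appendix, so there is no in-paper proof to compare against. That said, your approach is notably more elementary than the machinery deployed elsewhere in this paper (and presumably in~\cite{HT1}). The container/link-graph method used for Theorem~\ref{MainT1} proceeds by restricting to containers $F=A\cup B$, choosing $S_1\subseteq A$ and $S_2\subseteq B\cap[u]$ freely, and then arguing that the link graph on the remaining part has a unique maximal independent set. Your argument bypasses containers entirely by observing that the ``tail'' $A\cap[m+1,n]$ is already rigidly determined by $A\cap[m]$ for \emph{maximal} $A$, and then applying Green's theorem once to count the possible initial segments. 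What the container approach buys is finer control inside $[m]$ itself (leading to the sharper exponent $Crn/q$ in Theorem~\ref{MainT1}); what your approach buys is a clean one-line reduction that avoids Lemmas~\ref{L1Ext}--\ref{L4Ext} altogether.
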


\begin{prop}\label{best}\cite{HT1}
Let $\LL$ denote the equation $px+qy=z$ where $p\geq q$, $p\geq 2$ and $p,q \in \mathbb{N}$. 
The best upper bound on $f_{\max}(n,\LL)$ given by Theorems~\ref{max1},~\ref{max2} and~\ref{max3}  is:

\begin{enumerate}[(i)]
\item{$f_{\max}(n,\LL) \leq 3^{(\mu_{\LL}(n)-\mu_{\LL}^*(n))/3+o(n)}$ if $\gcd(p,q)=q$, $p\geq q^2$, and either $q\leq 9$ or $10\leq q \leq 17$ and $p<(a-1)(q^2-q)/(q(2-a)-1)$ where $a:=\log_3(8)$;}
\item{$f_{\max}(n,\LL) \leq 2^{(\mu_{\LL}(n)-\mu_{\LL}^*(n))/2+o(n)}$ if $\gcd(p,q)=q$ and $p \leq q^2-q$;}
\item{$f_{\max}(n,\LL) \leq 2^{\mu_{\LL}(n/q)+o(n)}$ otherwise.}
\end{enumerate}
\end{prop}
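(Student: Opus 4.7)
My plan is to reduce Proposition~\ref{best} to a pairwise comparison of the three exponents produced by Theorems~\ref{max1}, \ref{max2} and~\ref{max3}. The inputs are the known asymptotic values from~\cite{HT1}, which, writing $d:=\gcd(p,q)$, read
\[
\mu_\LL(n)=\frac{(p+q-1)n}{p+q}+O(1),\qquad \mu_\LL(n/q)=\frac{(p+q-1)n}{q(p+q)}+O(1),\qquad \mu_\LL^*(n)=\frac{(q-1)(d-1)n}{qd}+O(1).
\]
In particular $\mu_\LL^*(n)=(q-1)^2 n/q^2+O(1)$ when $q\mid p$ and $\mu_\LL^*(n)=o(n)$ when $d=1$. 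Setting $a:=\log_3 8=3/\log_2 3$ (so $a<2$), the three bounds translate to base-$2$ exponents
\[
E_1:=\tfrac{1}{a}(\mu_\LL(n)-\mu_\LL^*(n)),\qquad E_2:=\tfrac{1}{2}(\mu_\LL(n)-\mu_\LL^*(n)),\qquad E_3:=\mu_\LL(n/q).
\]

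Because $a<2$ we have $E_1>E_2$ always, so Theorem~\ref{max2} beats Theorem~\ref{max1} wherever Theorem~\ref{max2} applies. Substituting the formulas (with $p=kq$ in the $d=q$ regime), a short calculation gives $E_3-E_2=(q+k-1)n/(2(k+1)q^2)+o(n)>0$, so Theorem~\ref{max2} also beats Theorem~\ref{max3} there. This takes care of part~(ii).

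For the crucial $E_1$ versus $E_3$ comparison, write $A:=1-1/(p+q)$ and $B:=(q-1)(d-1)/(qd)$. The inequality $E_1<E_3$ is equivalent to $A(q-a)<qB$. In the $d=q$ regime this simplifies to
\[
p<\frac{(a-1)(q^2-q)}{(2-a)q-1}.
\]
The sign of $(2-a)q-1$ drives the analysis: for $q\le 9$ the denominator is negative, the derived inequality is automatic, and Theorem~\ref{max1} always beats Theorem~\ref{max3} (first alternative of~(i)); for $q\ge 10$ the denominator is positive, producing a genuine threshold. The threshold exceeds $q^2$ (so there is a non-vacuous range of $p$) exactly when the quadratic $(2-a)q^2-aq+(a-1)\le 0$ holds, which, via its approximate roots $0.47$ and $17.2$, pins the non-vacuous range to $10\le q\le 17$ (second alternative of~(i)). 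In the remaining parameter regimes---either $q\mid p$ with $p$ outside~(i), where the same algebra gives $E_3\le E_1$, or $d<q$, where $qB/(q-a)<1\le A$ (using $d\mid q$, hence $d\le q/2$) again gives $E_3\le E_1$---Theorem~\ref{max3} dominates, proving~(iii).

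The main obstacle is the $E_1$ vs.~$E_3$ step: keeping track of the sign of $(2-a)q-1$ (the source of the $q\le 9$ vs.~$q\ge 10$ split) and identifying the cutoff $q=17$ by solving a quadratic with irrational coefficients requires careful bookkeeping, but no new idea.
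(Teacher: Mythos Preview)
The paper does not prove Proposition~\ref{best}; it is quoted from~\cite{HT1} without argument, so there is no in-paper proof to compare against directly. Your approach---reduce to pairwise comparison of the three base-$2$ exponents $E_1,E_2,E_3$ using the asymptotic formulas for $\mu_\LL(n)$, $\mu_\LL(n/q)$ and $\mu^*_\LL(n)$---is the natural one, and your computations for parts (i) and (ii) are correct: the $E_2$ vs.~$E_3$ calculation is clean, and the $E_1$ vs.~$E_3$ analysis in the $d=q$ regime (sign of $(2-a)q-1$, the quadratic cutoff at $q\approx 17.2$) is right.

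There is, however, a genuine slip in your treatment of the $d<q$ sub-case of part~(iii). You write ``$qB/(q-a)<1\le A$'', but $A=1-1/(p+q)<1$, so the chain fails as stated. The conclusion $E_3\le E_1$ is still correct there, but it needs an honest argument. One fix: when $d=1$ we have $B=0$ and the inequality is trivial; when $2\le d<q$, divisibility forces $d\le q/2$, hence $qB\le (q-1)(q-2)/q$, while $p\ge q$ gives $A\ge 1-1/(2q)$. Then
\[
A(q-a)-qB \;\ge\; \Bigl(1-\tfrac{1}{2q}\Bigr)(q-a)-\tfrac{(q-1)(q-2)}{q}
\;=\;\tfrac{(5-2a)q+(a-4)}{2q}\,,
\]
which is nonnegative for $q\ge (4-a)/(5-2a)\approx 1.74$, so certainly for all $q\ge 2$. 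You should replace the faulty inequality chain with something along these lines.

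A minor point you glide over: in the $q\mid p$ regime the split between~(ii) ($p\le q^2-q$) and~(i)/(iii) ($p\ge q^2$) is exhaustive because there is no multiple of $q$ strictly between $q^2-q$ and $q^2$. It is worth saying this explicitly.
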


Recall $\mu^*_{\LL}(n)$ denotes the number of elements in $[n]$ which do not lie in \emph{any} non-trivial solution to $\LL$ that only consists of elements from $[n]$. 
Let $\LL$ denote the equation $px+qy=rz$ where $p\geq q \geq r$ and $p,q,r$ are fixed positive integers satisfying $\gcd(p,q,r)=1$.
Let $t:=\gcd(p,q)$. Then notice that $S:=\{s \in [n]: s>\floor{(rn-p)/q}, t \nmid s\}$ is a set of elements which do not lie in any solution to $\LL$ in $[n]$. This follows since if $s> \floor{(rn-p)/q}$ then $ps+q \geq qs+p > rn$ and so $s$ cannot play the role of $x$ or $y$ in an $\LL$-triple in $[n]$. If $t \nmid s$ then as $t|(px+qy)$ for any $x,y \in[n]$ but $\gcd(r,t)=1$ we have that $s$ cannot play the role of $z$ in an $\LL$-triple in $[n]$. Actually, for large enough $n$, every element that does not lie in any solution to $\LL$ in $[n]$ is in $S$, and so we have $\mu^*_{\LL}(n)=|S|=\ceil{(n-\floor{(rn-p)/q})(t-1)/t}$ for all such $\LL$. We omit the proof of this here.

We can now state the new summary.
\begin{prop}\label{best2}
Let $\LL$ denote the equation $px+qy=rz$ where $p\geq q \geq r$, $p\geq 2$ and $p,q,r$ are fixed positive integers satisfying $\gcd(p,q,r)=1$.
Let $t:=\gcd(p,q)$ and let $a:=\log_2 3$. The best upper bound on $f_{\max}(n,\LL)$ given by Theorems~\ref{max1},~\ref{max2},~\ref{max3} and~\ref{MainT1} is:
\begin{enumerate}[(i)]
\item{$f_{\max}(n,\LL) \leq 3^{(\mu_{\LL}(n)-\mu_{\LL}^{*}(n))/3+o(n)}$ if 
\begin{enumerate}[(a)]
\item{$r=1$, $\gcd(p,q)=q$, $p\geq \max{\{q^2,(q^2-q)a/(q(3-2a)+a)\}}$, and $q \leq 9$;}
\item{$r\geq 2$, $\mu_{\LL}(n)=\ceil{(t-1)n/t}$, and additionally (1) $p \neq q$ or (2) $2 \leq q\leq 18$;}
\item{$r \geq 2$, $q$ divides $p$, $p+q \geq rq$ and additionally (1) $p \neq q$ or (2) $2 \leq q\leq 18$;}
\end{enumerate}}
\item{$f_{\max}(n,\LL) \leq 2^{Crn/q+o(n)}$ where $C:=1-t(p^2+(p-t)(q-t))/(p^2(p+q))$ if
\begin{enumerate}[(a)]
\item{$r=1$, $\gcd(p,q) \neq q$ or $q>9$ or $p<q^2$ or $p<(q^2-q)a/(q(3-2a)+a)$;}
\item{$r\geq 2$, $\mu_{\LL}(n)=\ceil{(t-1)n/t}$, and $p=q \geq 19$.}
%\item{$r=2$, $p=q \geq 19$.}
\end{enumerate}}
\end{enumerate}
\end{prop}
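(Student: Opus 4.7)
The proof will be a pairwise comparison of the four upper bound exponents given by Theorems~\ref{max1}, \ref{max2}, \ref{max3}, and \ref{MainT1}, performed in each of the sub-cases listed. The first step is to reduce each bound to an explicit function of $p,q,r,t$ and $n$. By Theorem~\ref{Mu4}, in the regimes of interest one has either $\mu_{\LL}(n)=(t-1)n/t+o(n)$ or $\mu_{\LL}(n)=(p+q-r)n/(p+q)+o(n)$; the text preceding Proposition~\ref{best2} provides $\mu^*_{\LL}(n)=(q-r)(t-1)n/(qt)+o(n)$. When $q\mid p$ one has $t=q$, so $C$ simplifies to $p/(p+q)$ and the MainT1 exponent becomes $prn/(q(p+q))+o(n)$; the max3 exponent $\mu_{\LL}(rn/q)$ can then be read off directly from Theorem~\ref{Mu4}.

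Next I would dispose of the simple comparisons. Since $\log_2 3/3>1/2$, max2 always beats max1 whenever max2 applies (i.e., $r=1$, $q\mid p$, $p\leq q^2-q$). Likewise, when $q\mid p$ and $q\geq 2$, max3's exponent $(p+q-1)rn/(q(p+q))$ strictly exceeds MainT1's exponent $prn/(q(p+q))$, so max3 is never optimal in the cases considered. In particular, Proposition~\ref{best}(iii) is entirely subsumed by the new MainT1 bound, which explains why case (ii) of Proposition~\ref{best2} never mentions max3.

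The crucial comparison is max1 versus MainT1. Setting $a:=\log_2 3$, the inequality max1$\leq$MainT1 unfolds (after clearing denominators) to a linear inequality in $p$ whose leading coefficient is $(2q-1)a-3q$, which has the same sign as $q-a/(2a-3)$; this is positive for $q\geq 10$ and negative for $q\leq 9$. For $r=1$ and $q\mid p$, solving yields exactly the threshold $p\geq (q^2-q)a/(q(3-2a)+a)$ appearing in (i)(a), with case (ii)(a) recovering precisely its complement. For $r\geq 2$ with $q\mid p$ and $p=q$, an analogous computation using $\mu_{\LL}-\mu^*_{\LL}=r(q-1)n/q^2+o(n)$ and $C=1/2$ gives max1 $\leq$ MainT1 iff $2(q-1)a\leq 3q$, iff $q\leq 2a/(2a-3)\approx 18.65$; this produces the threshold $q\geq 19$ separating (i)(b)-(c) from (ii)(b). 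For $p>q$ with $q\mid p$, one checks that the ratio $(q-1)(\lambda+1)a/(3q\lambda)$ (where $p=\lambda q$, $\lambda\geq 2$) stays below $1$ for every $q\geq 2$, so max1 wins throughout the $p>q$ part of (i)(b)-(c) irrespective of $q$.

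The main obstacle will be the sub-case $q\nmid p$ within (i)(b), where $t<q$ and the simplification $C=p/(p+q)$ is unavailable. Here I would plug the general formula for $C$ into the max1-versus-MainT1 comparison, using $\mu_{\LL}-\mu^*_{\LL}=r(t-1)n/(tq)+o(n)$. The inequality max1$\leq$MainT1 reduces to $(t-1)a/(3t)\leq C$; since the lower bound on $r$ required by Theorem~\ref{Mu4}(iii) forces $r$ to be large compared with $p,q,t$ while $C\geq 1/2$ and $(t-1)a/(3t)<a/3\approx 0.528$, a direct estimate on $C$ (using that $t/(p+q)$ is small relative to $p/q$ under Theorem~\ref{Mu4}(iii)'s hypothesis) suffices to conclude. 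The remainder of the proof is then bookkeeping to verify that the conditions listed in each sub-case match the boundaries just derived.
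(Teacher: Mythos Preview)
Your overall strategy---pairwise comparison of exponents after writing everything in terms of $p,q,r,t$---matches the paper, and your $r=1$ analysis (including the threshold $p\geq (q^2-q)a/(q(3-2a)+a)$ and the cutoff $q\leq 9$) is correct. However, there are two genuine gaps.

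\textbf{The $q\nmid p$ sub-case of (i)(b).} Your proposed argument here does not work. The comparison between Theorem~\ref{max1} and Theorem~\ref{MainT1} reduces, as you note, to $(t-1)a/(3t)\leq C$; but both sides of this inequality are independent of $r$ (the factor $r/q$ in each exponent cancels). Hence the lower bound on $r$ from Theorem~\ref{Mu4}(iii) is irrelevant---and in any case that bound is not a hypothesis of Proposition~\ref{best2}, which assumes only that $\mu_{\LL}(n)=\lceil (t-1)n/t\rceil$. The paper instead rewrites the inequality entirely in terms of $r_1,r_2,t$ as
\[
t\bigl((a-3)r_1^2(r_1+r_2)+3r_1^2+3(r_1-1)(r_2-1)\bigr)<ar_1^2(r_1+r_2),
\]
and observes that the left side is negative whenever $(r_1,r_2)\neq(1,1)$, i.e.\ whenever $p\neq q$. (A more elementary route in the spirit of your sketch: when $q\nmid p$ one has $r_1\geq 3$, $r_2\geq 2$, and since $(r_1-1)(r_2-1)\leq (r_1-1)^2<r_1^2$ one gets $C\geq 1-2/(r_1+r_2)\geq 3/5>a/3$; but this still has nothing to do with $r$.)

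\textbf{Case (i)(c) with $p>q$.} Your ratio $(q-1)(\lambda+1)a/(3q\lambda)$ comes from $\mu_{\LL}-\mu^*_{\LL}=r(q-1)n/q^2$, which is valid only in case (i)(b), where $\mu_{\LL}(n)=(q-1)n/q$. In case (i)(c) one has $\mu_{\LL}(n)=(p+q-r)n/(p+q)$, so $\mu_{\LL}-\mu^*_{\LL}$ is different and your formula does not apply. The paper handles (i)(c) separately, rearranging the comparison to $r_1\bigl(a(q+qr-r)/3-qr\bigr)\leq a(r-q)/3$ and checking that the bracket is negative for $r\geq 2$.

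A smaller omission: you show max2 beats max1 but never compare MainT1 against max2; the paper needs (and proves) that MainT1 dominates max2 in its range of applicability to establish (ii)(a).
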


\begin{proof}
First suppose that $r=1$ (and so $\mu_{\LL}(n)=\ceil{(p+q-1)n/(p+q)}$). Note that $C \leq 1-t/(p+q)=(p+q-t)/(p+q) \leq (p+q-1)/(p+q)$ and so the exponent given by Theorem~\ref{MainT1} is at most the exponent given by Theorem~\ref{max3}. For Theorem~\ref{max2} we require $\gcd(p,q)=q$ and $p \leq q^2-q$. In this case $C=p/(p+q)$ and $(p+q-1)/(2(p+q))-(q-1)^2/(2q^2)=(2pq+q^2-p-q)/(2q^2(p+q)) \geq p/(q(p+q))=C/q$ and so the exponent given by Theorem~\ref{MainT1} is at most the exponent given by Theorem~\ref{max2}.
 
It remains to check when the bound given by Theorem~\ref{max1} is still better than the bound given by Theorem~\ref{MainT1}. By Proposition~\ref{best} this can only possibly be the case if $\gcd(p,q)=q$ and $p\geq q^2$. To prove (i)(a) it suffices to show that $$3^{\frac{(p+q-1)n}{3(p+q)}-\frac{(q-1)^2 n}{3q^2}} \leq 2^{\frac{pn}{q(p+q)}},$$ or rearranging $$p(q(3-2a)+a)\geq (q^2-q)a.$$ If $q\geq 10$ then $(q(3-2a)+a)$ is negative, but then we would require $p$ negative, a contradiction. Hence we must have $q \leq 9$ and then the inequality holds if $p>(q^2-q)a/(q(3-2a)+a)$.

Now suppose that $r \geq 2$ and $\mu_{\LL}(n)=\ceil{(t-1)n/t}$. Then $\mu_{\LL}(n)-\mu^*_{\LL}(n)=r(t-1)n/(qt)+o(n)$ and $3^{x/3}<2^{x}$ and so Theorem~\ref{max1} gives a better bound than Theorem~\ref{max3}. We wish to know when $$3^{\frac{r}{q} \frac{t-1}{3t}} < 2^{\frac{r}{q} (1-\frac{t}{p+q}(\frac{p^2+(p-t)(q-t)}{p^2}))}.$$ Write $r_1:=p/t$ and $r_2:=q/t$. The above rearranges to give $$t((a-3)r_1^2(r_1+r_2)+3r_1^2+3(r_1-1)(r_2-1))<ar_1^2(r_1+r_2).$$ The right hand side is positive and the left hand side is negative unless $r_1=r_2=1$. In this case $p=q=t$ and so we now require $3^{(q-1)/(3q)}<2^{1/2}$, which holds when $q\leq 18$. 

Finally suppose that $r \geq 2$, $q$ divides $p$ and $p+q \geq rq$ (so $\mu_{\LL}(n)=\ceil{(p+q-r)n/(p+q)}$). Since $q$ divides $p$, we have $t=q$ and $p=r_1 q$, and so Theorem~\ref{MainT1} gives a bound of $2^{rpn/(q(p+q))+o(n)}$. This is better than Theorem~\ref{max3} which gives a bound of $2^{r(p+q-r)n/(q(p+q))+o(n)}$ since $q \geq r$. Therefore we wish to know when $$3^{\frac{p+q-r}{3(p+q)} - \frac{(q-r)(q-1)}{3q^2}} < 2^{\frac{rp}{q(p+q)}}.$$ Rearranging, we require $r_1(a(q+qr-r)/3-qr) \leq a(r-q)/3$. Now note $a(q+qr-r)/3-qr$ is negative when $r \geq 2$, so this rearranges to give $r_1 \geq (q-r)/(r-q-rq+3rq/a)$. If $p>q$ (so $r_1 \geq 2$), it suffices to have $2 \geq (q-r)/(r-q-rq+3rq/a)$ or rearranging, $q(r(2-6/a)+3) \leq 3r$. This holds since $r(2-6/a)+3$ is negative for $r \geq 2$. Otherwise $p=q$, and so since $p+q\geq rq$, we have that $r=2$. So we require $1 \geq (q-2)/((6/a-3)q+2)$ which holds when $q \leq 18$. (In this final case, $\mu_{\LL}(n)=\ceil{(t-1)n/t}=\ceil{(p+q-r)n/(p+q)}=\ceil{(q-1)n/q}$.)

%or rearranging $$t(\frac{a}{3}(r_1^3r_2+rr_1^3+r_1^2r_2^2)-(rr_1^3+rr_1^2r_2-r_1^2-r_1r_2+r_1+r_2-1)) < \frac{a}{3}r(r_1^3+r_1^2r_2).$$

%We now assume $q$ divides $p$, since this is true in the only case where $r \geq 2$ and $\mu_{\LL}(n)=\ceil{(p+q-r)n/(p+q)}$ of which we know. So $p+q \geq rq$ and we have $r_2=1$ and $t=q$. Rearranging, we now require $r_1(a(q+qr-r)/3-qr) \leq qr-q-a(q-r)/3$. Now note $a(q+qr-r)/3-qr$ is negative when $r \geq 2$, so this rearranges to give $r_1 \geq (a(q-r)/3+q-rq)/(a(r-q-rq)/3+rq)$. This indeed holds since the numerator on the right hand side is negative when $r \geq 2$. 
%\COMMENT{What order should this be written in?! Some calculations work even when we don't know conditions on $p,q,r$, meanwhile some other conditions were necessary in calculations...}

\end{proof}
\end{appendix}
\end{document}